\theoremstyle{definition}
\newtheorem{thm}{Theorem}
\newtheorem{defi}[thm]{Definition}
\newtheorem{prop}[thm]{Proposition}
\newtheorem{coro}[thm]{Corollary}
\newtheorem{lemma}[thm]{Lemma}
\newtheorem{exam}[thm]{Example}
\numberwithin{thm}{section}
\newcommand{\mb}{\mathbb}
\newcommand{\bC}{\mb{C}}
\newcommand{\bK}{\mb{K}}
\newcommand{\bL}{\mb{L}}
\newcommand{\bN}{\mb{N}}
\newcommand{\bR}{\mb{R}}
\newcommand{\bS}{\mb{S}}
\newcommand{\bT}{\mb{T}}
\newcommand{\bW}{\mb{W}}
\newcommand{\bZ}{\mb{Z}}
\newcommand{\cA}{\mathcal{A}}
\newcommand{\cB}{\mathcal{B}}
\newcommand{\cC}{\mathcal{C}}
\newcommand{\cL}{\mathcal{L}}
\newcommand{\cW}{\mathcal{W}}
\newcommand{\frS}{\mathfrak{S}}
\DeclareMathOperator{\Ker}{\mathrm{Ker}}
\DeclareMathOperator{\codim}{\mathrm{codim}}
\DeclareMathOperator{\rank}{\mathrm{rank}}
\newcommand{\Arr}{\mathbf{Arr}}
\newcommand{\gArr}{\Arr^\circ}
\newcommand{\abs}[1]{{\left\lvert#1\right\rvert}}
\title{Degeneration in discriminantal arrangements}
\author{Takuya Saito\thanks{
Affiliations: Institute for Chemical Reaction Design and Discovery, Hokkaido University
e-mail: saito@icredd.hokudai.ac.jp}}
\date{\today}
\begin{document}
\maketitle
\begin{abstract}
Discriminantal arrangements are hyperplane arrangements that are generalization of braid arrangements.
They are constructed from given hyperplane arrangements, but their combinatorics are not invariant under combinatorial equivalence.
However, it is known that the combinatorics of the discriminantal arrangements are constant on a Zariski open set of the space of hyperplane arrangements. 
In the present paper, we introduce $(\mathbb{T},r )$-singularity varieties in the space of hyperplane arrangements to classify discriminantal arrangements and show that the Zariski open set is the complement of $(\mathbb{T},r )$-singularity varieties.
We study their basic properties and operations and provide examples, including infinite families of $(\mathbb{T},r )$-singularity varieties.
In particular, the operation that we call degeneration is a powerful tool for constructing $(\mathbb{T},r )$-singularity varieties. As an application, we provide a list of $(\mathbb{T},r )$-singularity varieties for spaces of small line arrangements.
\end{abstract}

\renewcommand{\thefootnote}{}
\footnotetext{
\begin{flushleft}
\noindent
\textit{2020 Mathematics Subject Classification}: primary; 52C35, secondary; 52C30, 14N20, 05B35%}
\par\noindent
%\footnotetext{
\textit{Keywords}: hyperplane arrangement,  intersection lattice, discriminantal arrangement
\end{flushleft}
}\renewcommand{\thefootnote}{\arabic{footnote}}

\section{Introduction}

The discriminantal arrangement $\cB(\cA)$ consists of a collection of hyperplanes in the space of parallel translations of a given arrangement $\cA$.
Manin and Schechtman introduced this concept as a generalized braid arrangement \cite{MS89}.
The motivation for studying the discriminantal arrangement includes the study of higher Bruhat orders and higher braid groups.
Originally, the discriminantal arrangement was defined for a generic arrangement.
However, Bayer and Brandt extended the definition to arbitrary arrangements, where the hyperplanes in $\cB(\cA)$ correspond to circuits of $\cA$.
Applications of discriminantal arrangements include braided monoidal $2$-categories \cite{KaVo94}, vanishing cohomology of toric varieties \cite{Perl11}, and the maximum likelihood degree of very affine varieties of point configurations \cite{ABFKST23}.
For a general theory of hyperplane arrangements, refer to \cite{OTBook}.

Each point in the discriminantal arrangement $\cB(\cA)$ corresponds to a parallel translation $\cA^t$ of the original arrangement $\cA$.
Several invariants of parallel translations in a discriminantal arrangement have been studied in \cite{CFW21, FuWang23}.
From this perspective of parallel translations, a toric version, called the discriminantal toric arrangement, exists \cite{Paga19}.

The concepts equivalent to discriminantal arrangements have also been defined in various contexts.
Some concepts equivalent to this are a circuit basis for a binary matroid by Longyear \cite{Lo80}, concurrence geometry and the geometry of circuits by Crapo \cite{Cr84,Cr85}, the derived matroid by Oxley and Wang \cite{OxWa19}.
A similar concept has been pointed out as secondary polytopes, or more generally, fiber zonotopes \cite{BB97,EdRei96}.
All of these concepts are defined for representations of matroids.
Here, representations of matroids are essentially equivalent to hyperplane arrangements.
Some similar combinatorial constructions are adjoints \cite{Cheung74}, which generalize these to geometric lattices without representations, and combinatorial derived matroids, which provide a purely combinatorial definition in \cite{FrJuKu23}.
However, they are not matroid invariants except for combinatorial derived matroids.
In other words, they are constructions that cannot be uniquely determined from a given matroid.
Equivalently, in terms of hyperplane arrangements, the constant combinatorics of discriminantal arrangements cannot be obtained even if the arrangements have the same combinatorics.
In the context of discriminantal arrangements, Falk was the first to point out this fact in \cite{Fa94}.
Another fundamental example is given by Crapo \cite{Cr84} (see Example \ref{Exam:Crapo}).

Therefore, one of the problems in the theory of discriminantal arrangements is determining the possible discriminantal arrangements $\cB(\cA)$ for an arrangement $\cA$ with given combinatorics.
Let $\Arr(n,k)$ denote the set of all arrangements consisting of $n$ hyperplanes in $\bK^k$.
As is well known, this is equivalent to the quotient of the Grassmannian $\mathrm{Gr}_k(\bK^n)$ by the maximal torus $(\bK^\times)^n$.
We focus on the subset $\gArr(n,k)$, consisting of generic arrangements in $\Arr(n,k)$.
It is known that an arrangement on a Zariski open set in the realization space of a generic arrangement yields discriminantal arrangements with constant combinatorics \cite{Ath99}.
Arrangements on such a Zariski open set are called \textit{very generic}, and arrangements that are not are called \textit{non-very generic}.

A similar idea is also considered in coding theory.
Discriminantal arrangements are also known as arrangements that are spanned by points that are in a general position.
Falk first pointed out this fact in \cite{Fa94}.
From this perspective, the characteristic polynomial is calculated in \cite{KNT12, NuTa12}.
Brakensiek, Gopi, and Makam recently defined MDS$(\ell)$ codes based on the rank condition of the intersection of $\ell$ subspaces generated by the columns of the generator matrix of a code (\cite{BGM22}).
Just as an MDS code corresponds to a generic arrangement, for sufficiently large $\ell$, the MDS$(\ell)$ code corresponds to a very generic arrangement.
In the theory of higher order MDS codes, their existence depends on a coefficient field, and this problem is studied in \cite{BDG24}.
In this sense as well, determining a very/non-very generic arrangement is an important problem.
Furthermore, Jurrius and Pellikaan \cite{JuPe15} introduced the derived code, equivalent to the discriminantal arrangement, to determine extended coset weight enumerators.

Athanasiadis provided a combinatorial sufficient condition for a parallel translation to be non-very generic, but there are examples in \cite{SeYaArxiv22} where the converse does not hold.
Some other sufficient conditions for non-very genericity have been provided in \cite{SeYa22, SeYaArxiv22}. 
The set of non-very generic arrangements is an algebraic set; thus, it defines a stratification of $\Arr_\bK(n,k)$ (or the Grassmannian $\mathrm{Gr}_k(\bK^n)$), which is a refinement of matroid stratification (see \cite{Fa94}).
The Pappus hypersurfaces in $\mathrm{Gr}_3(\bK^n)$ were introduced in \cite{SaSeYa17} to study and classify Falk's examples.
It has been shown in \cite{SaSeYa19, SaSe24} that the intersections having the largest codimension of the Pappus hypersurfaces are related to the Hesse configuration and the dodecahedron, and the relationship with classical geometric objects is also interesting.
As a generalization of the Pappus hypersurface, we introduce the \textit{$(\bT,r )$-singularity variety} $V_{(\bT,r)}$ in $\Arr(n,k)$.
We show that this definition is appropriate in Theorem \ref{thm:NVGV2}; the set of non-very generic arrangements is the union of $(\bT,r )$-singularity varieties.
In order to examine the stratification of $\Arr(n,k)$, which is defined by $(\bT,r )$-singularity varieties, we must classify and determine them.
Some known simple descriptions of the stratification using algebraic sets in $\gArr(n,k)$ are found in \cite{SaSeYa19, SaSe24}.
However, explicit descriptions of the defining equations are generally unknown.
We construct $2n$-wheels as an infinite family of $(\bT,r )$-singularity varieties and provide their explicit equations in Theorem \ref{thm:wheel}.
Furthermore, we examine the behavior of $(\bT,r )$-singularity varieties for deleted/restricted arrangements (Theorem \ref{thm:degen}).
We show that the very generic arrangements are closed under the operations deletion and restriction.
In particular, deleting parallel elements, which we call \textit{degeneration}, is a powerful tool for finding non-very generic arrangements.
By applying this to $2n$-wheels, we obtain another infinite family of $(\bT,r )$-singularity varieties, which we call the $2n+2$-ladders (Proposition \ref{2n+2ladder}).
Determining all the $(\bT,r )$-singularity varieties for $8$ or fewer lines is possible (Theorem \ref{thm:8lines}).
Other studies on non-very generic arrangements are found in \cite{LiSe18, Yamagata23}.

The structure of this paper is as follows.
Section \ref{Sec:DefOfDisArr} includes the basic definitions of discriminantal arrangements and presents some examples of discriminantal arrangements with different combinatorics constructed from arrangements with the same combinatorics.
In Section \ref{Sec:NVArr}, we revisit the definition of non-very generic arrangements, translations, and intersections.
Furthermore, we introduce $(\bT,r )$-singularity varieties, which consist of non-very generic arrangements.
We discuss fundamental properties related to the necessary and sufficient conditions for non-very generic arrangements.
Furthermore, we give an inequality about codimension of $(\bT,r )$-singularity varieties.
In Section \ref{Sec:wheel}, we construct an infinite family of non-very generic line arrangements called $2n$-wheels and provide the defining equations of their $(\bT,r )$-singularity varieties.
Moreover, we give a sufficient condition for the existence of (degenerated) $2n$-wheels.
In Section \ref{Sec:Degen}, we review the discriminantal arrangement for multiarrangements.
Then, we discuss the deletion and restriction of arrangements, mainly focusing on the case of deleting parallel hyperplanes, which we refer to as degeneration.
In Section \ref{Sec:ExamNVVar}, by applying the construction from Section \ref{Sec:Degen} to $2n$-wheels, we construct the $2n+2$-ladders.
We determine all the defining equations for a minimal non-very generic line arrangement with eight lines.
Additionally, we discuss non-very generic line arrangements with nine lines.

%%%%%%%%%%%%%%%%%%%%%%%%%%%%%%%%%%%%%%%%%%%%%%%%%%%%%%%
\section{Discriminantal arrangements}\label{Sec:DefOfDisArr}
In this section, we provide the definitions of parallel translations of the arrangement, discriminantal arrangements, and examples.

\subsection{Notations and basics on hyperplane arrangements}
Throughout this paper, we suppose $\bK$ is a commutative field, and write $W$ a $k$-dimensional vector space over $\bK$.
We sometimes abbreviate the set $\{a, b, \dots, c\}$ as $ab \dots c$.

First, we recall the concepts of hyperplane arrangements.
Let $\cA = \{H_1,\ldots, H_n\}$ be a central hyperplane arrangement in $W$ with normal vectors $\alpha_i\in W^\ast$.
The \textit{combinatorics} (or the intersection lattice) of $\cA$ forms a graded lattice of the intersections of hyperplanes in $\cA$, ordered by reverse inclusion and denoted by $\cL(\cA)$.
The rank of $\cL(\cA)$ is defined as $\rank X = k - \dim X$.
A central arrangement $\cA$ is \textit{generic} if $\rank\left(\bigcap_{i\in S} H_i\right) = \min\{\abs{S}, k\}$ for each $S\subset [n] = \{1, \ldots, n\}$.
Two arrangements are \textit{combinatorially equivalent} if their combinatorics are lattice isomorphic and \textit{linearly equivalent} if a linear map exists between ambient spaces,  inducing a bijection between these arrangements.
For convenience, we require $\cA$ to be an \textit{essential} arrangement, that is, $\dim\bigcap_{H\in \cA}H=0$.
\textbf{Until Section \ref{Sec:wheel}, we assume that $\cA$ is a generic arrangement.}

\subsection{The space $\bS(\cA)$ of parallel translations of $\cA$ and its subspaces}
We define the space of parallel translations of an arrangement.
There is a natural isomorphism $\alpha^{-1}_i: \bK \to W/H_i; t_i \mapsto \alpha^{-1}_i(t_i)$, which induces the isomorphism from $\bK^n$ to the space $\bS(\cA) = \bigoplus_{i=1}^n W/H_i$, which is called \textit{the space of parallel translations} of $\cA$.
Define $\cA^t$ to be the image of $t = (t_i)_{i=1}^n \in \bK^n$.
Since $H_i^t = \alpha_i^{-1}(t_i)$ is a hyperplane in $W$, $\cA^t$ can be regarded as a hyperplane arrangement $\{H_i^t \mid i=1, \ldots, n\}$, which is a \textit{parallel translation} of $\cA$ by $t$.

For each subset $S$ of $[n]=\{1,\dots,n\}$, define the subspace $D_S = D_S(\cA)$ in $\bS(\cA)$ as the set of translations $\cA^t$ whose intersection of hyperplanes, indexed by $S$, is not empty, i.e., 
\begin{equation*}
D_S = \{\cA^t \in \bS(\cA) \mid \bigcap_{i\in S} H^t_i \neq \emptyset\}.
\end{equation*}
The dimension of $D_S$ is $\dim D_S=n-\abs{S}+\rank(\bigcap_{i\in S}H_i)=\min\{n,n+k-\abs{S}\}$.
Thus, $D_S$ is a hyperplane in $\bS(\cA)$ if and only if $\abs{S}=k+1$.
Let $\cC(\cA)$ denote $\{C\subset[n]\mid \abs{C}=k+1\}$.
Then $D_C$ forms a hyperplane in $\bS(\cA)$ with the normal $\alpha_C$, which is the Laplace expansion of the matrix $\begin{pmatrix}e^\ast_{i_1}&\dots&e^\ast_{i_{k+1}}\\ \alpha_{i_1}&\dots&\alpha_{i_{k+1}}\end{pmatrix}$ along the first row, for each $C=\{i_1,\dots,i_{k+1}\}\in \cC(\cA)$.
Here the $e_i$'s, in which each $e_i$ spans $W/H_i$ for each $i\in[n]$, are the canonical basis of $\bS(\cA)$, and the $e^\ast_i$ are the dual basis.

The \textit{discriminantal arrangement} associated to $\cA$ is the hyperplane arrangement
\begin{equation*}
\cB(\cA)=\{D_C\subset \bS(\cA)\mid C\in \cC(\cA)\}.
\end{equation*}

If the original arrangements are linearly equivalent, then the discriminantal arrangements associated to them are also linearly equivalent.
When $k=1$, $\cB(\cA)$ is the same as the braid arrangement, which consists of $D_{ij}=\{(t_1,\dots,t_n)\in\bK^n\mid t_i=t_j\}$ for all $1\leq i<j\leq n$.
The normal vectors of $\cA$ consist of non-zero scalars; thus, $\cC(\cA)=\binom{[n]}{2}$.
The hyperplanes are $D_{ij}=\{\cA^t\in \bS(\cA)\mid {t_i}\cap{t_j}\neq \emptyset\}=\{\cA^t\in \bS(\cA)\mid t_i=t_j\}$.

Let us define a representative of an intersection $X\in \cL(\cB(\cA))$.
If $\cA^{t_1},\cA^{t_2}$ are parallel translations of $\cA$ such that $\cL(\cA^{t_1})\cong \cL(\cA^{t_2})$, then we have that $\cA^{t_1}\in D_S$ if and only if $\cA^{t_2}\in D_S$ for any $S\in 2^{[n]}$.
We call an element $\cA^t$ of $X$ a \textit{representative} of $X$, and write $[\cA^t]$ as $X$ if and only if $\cA^t \in X\setminus(\bigcup_{Y\in\cL(\cB(\cA)),Y\subsetneq X}Y)$.
If  $\cA^{t_1},\cA^{t_2}$ are both representatives of  $X$, then $\cL(\cA^{t_1})\cong \cL(\cA^{t_2})$.
We call $D_S$ a \textit{component} of $X$ if $\abs{S}\geq k+1$, $X\subset D_S$ and $X\not\subset D_{S\cup\{i\}}$ for all $i\in[n]\setminus S$.
Define the \textit{canonical presentation} of $X=[\cA^t]$, denoted by $\bT(X)=\bT([\cA^t])$, as the set consisting of all the components of $X$, i.e.
\begin{equation*}
\bT(X)=\{S\subset [n]\mid D_S \mbox{ is a component of }X\}
\end{equation*}

The following example is fundamental.

\begin{exam}[Crapo \cite{Cr84}]\label{Exam:Crapo}
	Let $n=6$ and $k=2$, and let the coefficient field be the complex field $\bC$.
	Take $\lambda\in \bC\setminus\{0,\frac{1}{2},1,2\}$ and consider the vectors
	\begin{equation*}
	(\alpha_1\alpha_2\alpha_3\alpha_4\alpha_5\alpha_6)=\begin{pmatrix}1&2&1&1&0&\lambda\\0&1&1&2&1&1\end{pmatrix}.
	\end{equation*}
	Let $\cA$ be a generic arrangement with the normal vectors $\{\alpha_1 ,\alpha_2, \alpha_3, \alpha_4, \alpha_5, \alpha_6\}$ (Fig~\ref{fig;Ceva1}).
	We have $\cC(\cA)=\binom{[6]}{3}=\{S\subset [6]\mid \abs{S}=3\}$.
	Since $\cA$ is generic, $\cL(\cA)$ is independent of the choice of $\lambda$.
	We now consider the intersection $X$ of hyperplanes $D_{123},D_{156},D_{246},D_{345}$ in $\cB(\cA)$.
	We have that
	\begin{align*}
		\alpha_{123}=&e^\ast_1-e^\ast_2+e^\ast_3,&
		\alpha_{156}=&-\lambda e^\ast_1-e^\ast_5+e^\ast_6,\\
		\alpha_{246}=&(1-2\lambda)e^\ast_2+(\lambda-2)e^\ast_4+3e^\ast_6,&
		\alpha_{345}=&e^\ast_3-e^\ast_4+e^\ast_5.
	\end{align*}
	\begin{figure}[t]
	\begin{tabular}{ccc}
	\begin{minipage}{0.3\linewidth}
	\centering
	\begin{tikzpicture}[scale=2]
		\draw (0,-1)--(0,1) node[pos=-0.1]{$H_{1}$};
		\draw (-0.45,0.9)--(0.45,-0.9) node[pos=1.1]{$H_{2}$};
		\draw (-0.7,0.7)--(0.7,-0.7) node[pos=1.1]{$H_{3}$};
		\draw (-0.9,0.45)--(0.9,-0.45) node[pos=1.1]{$H_{4}$};
		\draw (-1,0)--(1,0) node[pos=1.1]{$H_{5}$};
		\draw (-0.9,-0.45)--(0.9,0.45) node[pos=1.1]{$H_{6}$};
	\end{tikzpicture}
	\caption{$\cA=\cA^0$ in Example \ref{Exam:Crapo}}\label{fig;Ceva1} 
	\end{minipage}&
	\begin{minipage}{0.3\linewidth}
	\centering
	\begin{tikzpicture}[scale=2.5]
		\draw (0,-0.2)--(0,1.2) node[pos=-0.1]{$H^t_{1}$};
		\draw (0.6,-0.2)--(-0.1,1.2) node[pos=-0.1]{$H^t_{2}$};
		\draw (-0.2,1.2)--(1.2,-0.2) node[pos=-0.1]{$H^t_{3}$};
		\draw (-0.2,0.6)--(1.2,-0.1)node[pos=-0.1]{$H^t_{4}$};
		\draw (-0.2,0)--(1.2,0) node[pos=-0.1]{$H^t_{5}$};
		\draw (-0.2,0.3)--(0.8,0.6) node[pos=1.1]{$H^t_{6}$};
	\end{tikzpicture}
	\caption{a representative of $D_{123}\cap D_{345}$}\label{fig:Ceva2} 
	\end{minipage}&
	\begin{minipage}{0.3\linewidth}
	\centering
	\begin{tikzpicture}[scale=2.5]
		\draw (0,-0.2)--(0,1.2) node[pos=-0.1]{$H^t_{1}$};
		\draw (0.6,-0.2)--(-0.1,1.2) node[pos=-0.1]{$H^t_{2}$};
		\draw (-0.2,1.2)--(1.2,-0.2) node[pos=-0.1]{$H^t_{3}$};
		\draw (-0.2,0.6)--(1.2,-0.1)node[pos=-0.1]{$H^t_{4}$};
		\draw (-0.2,0)--(1.2,0) node[pos=-0.1]{$H^t_{5}$};
		\draw[dashed] (-0.2,-0.2)--(0.8,0.8) node[pos=1.1]{$H^t_{6}$ $(\lambda=-1)$};
		\draw[dash dot] (-0.2,-0.1)node[at={(1.2,0.5)}]{$H^t_{6}$ $(\lambda\neq-1)$}--(0.8,0.4);
	\end{tikzpicture}
	\caption{a representative of $D_{123}\cap D_{156}\cap D_{345}$}\label{fig:Ceva3} 
	\end{minipage}
	\end{tabular}
	\end{figure}
	
	\noindent
	Then
	\begin{equation*}
	\rank X=6-\dim\langle\alpha_{123},\alpha_{156},\alpha_{246},\alpha_{345} \rangle=\left\{\begin{aligned}4&&\mbox{if }\lambda\neq-1,\\ 3&& \mbox{if }\lambda=-1.\end{aligned} \right.
	\end{equation*}
	Therefore, we get the representatives as in Figure \ref{fig:Ceva3} and the canonical presentations 
	\begin{equation*}
	\bT(X)=\left\{\begin{aligned}\{123456\}&&\mbox{if }\lambda\neq-1,\\ \{123,156,246,345\}&& \mbox{if }\lambda=-1.\end{aligned} \right.
	\end{equation*}
	A representative of $X$ is $\cA^{(0,0,0,0,0,0)}=\cA^0$ if $\lambda\neq-1$, or $\cA^{(0,1,1,1,0,0)}$ if $\lambda=-1$.
	
	Geometrically, one of the representatives of $D_{123}\cap D_{345}$ is $\cA^{(0,1,1,1,0,t_6)}$ for a generic $t_6$ (see Figure \ref{fig:Ceva2}).
	Then we obtain
	\begin{equation*}
		H^t_1\cap H^t_2\cap H^t_3=\{(0,1)\},\,H^t_3\cap H^t_4\cap H^t_5=\{(1,0)\}, \,H^t_1\cap H^t_5=\{(0,0)\}.
	\end{equation*}
	By translating $H^t_6$ through the intersection of $H^t_1$ and $H^t_5$, we find that $t_6=0$.
	This means that $\cA^{(0,1,1,1,0,0)}$ belongs to $D_{156}$.
	Now $H^t_2\cap H^t_4=\{(1/2,1/2)\}$, and $H^t_6$ passes through the intersection if and only if $\lambda=-1$.
	In other words, $D_{123}\cap D_{156}\cap D_{345}\subset D_{246}$ if $\lambda=-1$.
	If $\lambda\neq-1$, then we have $D_{123}\cap D_{156}\cap D_{345}\not\subset D_{246}$.
	Therefore, we can determine the rank of $X$.
	In some cases, it is possible to determine the rank of the intersection by observing the translation of $\cA$ in this way.
	
	More generally,over any commutative coefficient field, a discriminantal arrangement associated with a line arrangement $\cA$, consisting of $6$ lines, has an intersection $D_{123}\cap D_{156}\cap D_{246}\cap D_{345}$ of rank $3$ if and only if 
	\begin{equation}\label{Eq:quad}
	\Delta_{16}\Delta_{24}\Delta_{35}-\Delta_{15}\Delta_{26}\Delta_{34}=0
	\end{equation}
	where we use the symbol $\Delta_{ij}=\mathrm{det}(\alpha_i,\alpha_j)$.
	This equation was shown in \cite{Cr84, SaSe24}.
	We call a representative of the non-very generic intersection a Crapo configuration, whose canonical presentation is $\bT$ or its permutation $\sigma.\bT=\{\{\sigma.i_1,\sigma.i_2,\sigma.i_3\}\mid i_1i_2i_3\in\bT\}$ for $\sigma\in \frS_n$.
	Notice that the canonical presentation combinatorially determines this condition.
	This is an example of a discriminantal arrangement that is not invariant under the combinatorial equivalence relation.
\end{exam}

A phenomenon where intersections of hyperplanes in $\cB(\cA)$ with the same indices have different ranks also occurs for other values of $n$ and $k$.
The following example differs from the previous two examples in that the intersections of hyperplanes in $\cB(\cA)$ with the same canonical presentations have different ranks.
The next section explains the main difference between Example \ref{Exam:Crapo} and Example \ref{Exam:SeYa}.

\begin{exam}[Settepanella, Yamagata {\cite[Example 4.3.]{SeYaArxiv22}}]\label{Exam:SeYa}
	Let $n=10,k=3$, and let the coefficient field be the complex field $\bC$.
	Consider  $C_1 =\{1,2,3,4\}, C_2 =\{1,5,6,7\},\allowbreak C_3 =\{2,5,8,9\}, C_4 =\{3,6,8,10\}$ and $C_5 =\{4,7,9,10\}$.
	For a sufficiently generic arrangement $\cA$, we get $\rank \bigcap_{i=1}^5 D_{C_i}=5$.
	However, if we take normal vectors $\alpha_i$ of $H_i$ $(i=1,\dots,10)$ as follows;
	\begin{equation*}
	(\alpha_1,\ldots,\alpha_{10})=
	\begin{pmatrix}
		0&20&2&3&0&1&1&4&314&139\\
		10&0&-3&1&0&-1&2&-1&-40&30\\
		3&-9&0&0&1&1&2&-3&-197&-43
	\end{pmatrix},
	\end{equation*}
	then $\rank \bigcap_{i=1}^5 D_{C_i}=4$.
	The difference from the previous example is that the canonical presentation of intersection $X=\bigcap_{i=1}^5 D_{C_i}$ is the same $\{C_1,C_2,C_3,C_4,C_5\}$ in both cases of $\rank X=4$ or $5$.
	Hence, this condition does not depend solely on combinatorics.
\end{exam}

%%%%%%%%%%%%%%%%%%%%%%%%%%%%%%%%%%%%%%%%%%%%%%%%%%%%%%%

\section{Non-very generic arrangements}\label{Sec:NVArr}
As we saw in the previous section, discriminantal arrangements with different combinatorics can be obtained from arrangements with the same combinatorics.
We define classes of original arrangements based on this problem.
First, Bayer and Brandt defined very generic arrangements for the real case.
An arrangement $\cA$ consisting of $n$ hyperplanes in $\bR^k$ is \textit{very generic} if the cardinality of $\cL(\cB(\cA))$ is the largest possible for discriminantal arrangements associated to essential arrangements with $n$ hyperplanes in $\bR^k$.

If $\cA$ is generic, then for all translations $\cA^t$ of $\cA$, each $k$-element subset of $[n]$ is contained in at most one member of $\bT([\cA^t])$.
Hence, we have that if a family $\bT$ of subsets of $[n]$ is a canonical presentation of some intersection of a discriminantal arrangement, then $\bT$ satisfies the following conditions:
\begin{itemize}
\item[(Q0)] for each elements $S_1,S_2\in\bT$, $S_1=S_2$ if $S_1\subset S_2$,
\item[(Q1)] each element of $\bT$ has at least $k+1$ elements,
\item[(Q2)] each $k$-element subset of $[n]$ is contained in at most one member of $\bT$.
\end{itemize}
In order to consider all possible combinatorics of parallel translations of the generic arrangement, we define the set $Q(n,k)=Q([n],k)$ be the collection of families of subsets of $[n]$ whose members satisfy the conditions (Q0), (Q1), and (Q2).
The order on $Q(n,k)$ is defined as follows: for $\bT_1$ and $\bT_2$, $\bT_1\leq\bT_2$ if for every $S_1\in \bT_1$, there exists $S_2\in\bT_2$ such that $ S_1\subset S_2$.
\textbf{From now on, we always assume that a set $\bT$ satisfies (Q0).}

Following the works of Bayer--Brandt and Athanasiadis, we define a map $\nu:Q(n,k)\to\bN$ and a subposet $P(n,k)$ of $Q(n,k)$.
\begin{defi}[Bayer--Brandt \cite{BB97}, Athanasiadis \cite{Ath99}]\label{def:BBAcond}
Let $\bT$ be an element in $Q(n,k)$ and define 
\begin{equation*}
\nu(\bT)=\sum_{S\in\bT}(\abs{S}-k)
\end{equation*}
for each $\bT\in Q(n,k)$.
The family $\bT$ satisfies the \textit{Bayer-Brandt-Athanasiadis condition \textup{(}the BBA-condition\textup{)}} if 
\begin{itemize}
\item[\textup{(Q1)}] each element of $\bT$ has at least $k+1$ elements, and
\item[\textup{(P)}] $\abs{\bT'}>1$, then $\nu\left(\left\{\bigcup_{S\in\bT'}S\right\}\right)>\nu(\bT')$ for all $\bT' \subset\bT$.
\end{itemize}
The poset $P(n,k)$ is a subposet of the poset $Q(n,k)$ graded by $\nu$.
It consists of all $\bT\in Q(n,k)$ satisfying the BBA-condition.
\end{defi}

The following theorem was conjectured by Bayer--Brandt, and proved by Athanasiadis.
\begin{thm}[Bayer--Brandt \cite{BB97}, Athanasiadis \cite{Ath99}]
Let $n\geq k+1\geq 2$.
There exists a rank $k$ real arrangement $\cA$ consisting of $n$ hyperplanes, which is very generic and $\cL(\cB(\cA))$ isomorphic to $P(n,k)$.
The isomorphism is given by the canonical presentation $\bT:\cL(\cB(\cA))\to P(n,k)$.
\end{thm}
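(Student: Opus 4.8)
The plan is to exhibit one sufficiently generic arrangement, defined over $\bR$, and to show that for it the canonical presentation $\bT\colon\cL(\cB(\cA))\to P(n,k)$ is a bijection that respects the two posets; the very genericity is then read off from a maximality statement. Concretely, I will work over the Zariski-open locus $U_0\subset\gArr(n,k)$ on which every intersection $\bigcap_{C}D_C$ attains the maximal possible rank, and prove three things: (1) $U_0$ is dense and carries the maximum of $\abs{\cL(\cB(\cA))}$, so arrangements in $U_0$ are exactly the very generic ones; (2) for $\cA\in U_0$ the map $\bT$ is an isomorphism onto $P(n,k)$; (3) $U_0$ has real points.

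For (1), observe that the rank of each flat $\bigcap_{C}D_C$ is an upper-semicontinuous function of $\cA$, since it equals the rank of the system of normals $\alpha_C$, whose vanishing minors cut out closed conditions on $\gArr(n,k)$. A standard specialization argument then shows that under a degeneration $\cA\rightsquigarrow\cA'$ the flats of $\cL(\cB(\cA'))$ can only arise by merging flats of the generic member, so $\abs{\cL(\cB(\cA'))}\leq\abs{\cL(\cB(\cA))}$ for $\cA$ generic. Hence $\abs{\cL(\cB(\cA))}$ is maximized exactly on the dense open set $U_0$, and by definition these are the very generic arrangements; it remains only to identify the common value of $\cL(\cB(\cA))$ on $U_0$.

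For (2), I treat the two directions of $\bT$ for $\cA\in U_0$. The excerpt already gives $\bT(X)\in Q(n,k)$, and injectivity is immediate from the reconstruction $X=\bigcap_{S\in\bT(X)}D_S$; moreover $X_1\supseteq X_2\iff\bT(X_1)\leq\bT(X_2)$, so $\bT$ is order-preserving in both directions and it suffices to prove it is a bijection onto $P(n,k)$. That $\bT(X)$ lands in $P(n,k)$ comes from the BBA-condition: (Q1) is immediate, and (P) follows from the geometry of representatives — each component $S\in\bT(X)$ forces the hyperplanes $\{H^t_i\}_{i\in S}$ through a common point $p_S$, the $p_S$ are pairwise distinct by maximality of components, and on $U_0$ this distinctness yields the strict rank gap $\nu\bigl(\{\bigcup_{S\in\bT'}S\}\bigr)>\nu(\bT')$ for every subfamily $\bT'$ with $\abs{\bT'}>1$. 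Surjectivity is the realization statement: for every $\bT\in P(n,k)$ the flat $\bigcap_{S\in\bT}D_S$ has codimension exactly $\nu(\bT)=\sum_{S\in\bT}(\abs{S}-k)$ on $U_0$, and its canonical presentation is $\bT$. Since $\codim$ of an intersection is always at most the sum of the codimensions, the content is that on $U_0$ the relevant normals $\alpha_C$ impose $\nu(\bT)$ independent conditions, which is precisely what (P) secures. Together with the grading $\rank=\nu$, this makes $\bT$ a graded poset isomorphism $\cL(\cB(\cA))\cong P(n,k)$, and in particular $\abs{\cL(\cB(\cA))}=\abs{P(n,k)}$.

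For (3), the defining inequalities and non-vanishing conditions of $U_0$ have integer coefficients, so $U_0$ is a non-empty Zariski-open set defined over $\bR$; being dense it contains real points, and any real $\cA\in U_0$ is very generic with $\cL(\cB(\cA))\cong P(n,k)$. The main obstacle is the surjectivity/realization in (2): showing that (P) is not just necessary but sufficient for the expected codimension $\nu(\bT)$ to be attained generically, simultaneously for all $\bT\in P(n,k)$. I expect to handle this by an induction along the $\nu$-grading of $P(n,k)$, building a flat of presentation $\bT$ from flats of smaller $\nu$ and checking that no unintended incidence is forced — equivalently, that the added normals $\alpha_C$ stay independent — while keeping every realization inside the single open set $U_0$. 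Verifying this independence is the delicate heart of the argument; by contrast the upper bound (1) and the injectivity and order-preservation in (2) are comparatively formal once the distinct-points observation is in hand.
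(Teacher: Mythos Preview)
The paper does not give its own proof of this theorem: it is stated as a cited result, conjectured by Bayer--Brandt and proved by Athanasiadis, and the paper simply quotes it. So there is nothing in the paper to compare your proposal against.

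As for the proposal itself, it is an outline rather than a proof, and the gap is exactly where you suspect. Steps (1) and (3), and the injectivity/order-preservation in (2), are indeed formal. The substantive claim is the realization step: that for every $\bT\in P(n,k)$ the normals $\{\alpha_C : C\subset S\in\bT\}$ impose $\nu(\bT)$ independent conditions for generic $\cA$. You write that this ``is precisely what (P) secures,'' but that sentence is the theorem, not an argument for it: (P) is a purely combinatorial inequality on $\bT$, whereas independence of the $\alpha_C$ is an algebraic statement about the coefficient matrix, and the whole content of Athanasiadis's paper is the passage from one to the other. Your later acknowledgement that this is ``the delicate heart of the argument'' is accurate, but the plan to ``induct along the $\nu$-grading \ldots\ checking that no unintended incidence is forced'' is not yet a proof. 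Athanasiadis's actual induction requires a structural lemma about how an element of $P(n,k)$ is covered by another, and a careful verification that adding the new circuit normal does not fall into the span of the old ones generically; neither ingredient appears here. So the framework is right, but the key idea is missing.
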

We define very generic arrangements over general fields.
\begin{defi}
Let $\cA$ be a generic arrangement with $n$ hyperplanes in $k$ dimensional space $W$, and let $X$ be an intersection in $\cL(\cB(\cA))$.
\begin{itemize}
\item an arrangement $\cA$ is called \textit{very generic} if $\cL(\cB(\cA))\cong P(n,k)$;
\item an intersection $X\in \cL(\cB(\cA))$ is \textit{very generic} if $\bT(X)\in P(n,k)$ and $\rank X=\nu(\bT(X))$;
\item a translation $\cA^t$ is \textit{very generic} if $\cA^t$ is representative of some very generic intersection.
\end{itemize}
Otherwise, they are called \textit{non-very generic}.
A non-very generic intersection $X$ is \textit{minimal} if $Y$ is very generic whenever $X\subsetneq Y\in\cL(\cB(\cA))$.
\end{defi}

Notice that (non-)very generic intersections were first defined in \cite{SaSe24}; however, due to the ambiguity of their original definition, we will adopt the above definition.
The condition of very generic intersections means that if $X=\bigcap_{S\in\bT(X)}D_S$ is very generic then $D_S$ intersect transversally where $S\in \bT(X)$, i.e. $\rank X=\sum_{S\in \bT(X)} \rank D_S$.
Furthermore, if $\bT(X)$ fails the BBA-condition, then $X$ is non-very generic.
However, the converse is not necessarily true.
Examples of such cases occur when $\bT(X)\in P(n,k)$ and $\rank X<\nu(\bT(X))$; one of the examples is in Example \ref{Exam:SeYa}.

\begin{prop}
Let $X\in \cL(\cB(\cA))$. Then $\rank X\leq \nu(\bT(X))$.
\end{prop}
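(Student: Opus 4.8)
The claim is that $\rank X \leq \nu(\bT(X))$ for any intersection $X \in \cL(\cB(\cA))$. The plan is to relate both sides to the defining data of the components of $X$. Recall that $\rank X = \codim X$ in $\bS(\cA)$, and $X = \bigcap_{S \in \bT(X)} D_S$, where each $D_S$ is the intersection of all hyperplanes $D_C$ with $C \in \cC(\cA)$ and $C \subset S$. Writing $X$ as an intersection of the hyperplanes $D_C$, its rank equals the dimension of the span of the corresponding normals $\alpha_C$, so $\rank X = \dim \langle \alpha_C \mid C \subset S,\ S \in \bT(X),\ \abs{C}=k+1 \rangle$.

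First I would reduce to a single component. Since the span of the normals over all components is contained in the sum of the spans over each component, we get $\rank X \leq \sum_{S \in \bT(X)} \rank D_S$, where $\rank D_S$ is the codimension of $D_S$ in $\bS(\cA)$. The key local computation is then $\rank D_S = \abs{S} - k = \nu(\{S\})$ for each single $S$ with $\abs{S} \geq k+1$. Indeed, by the dimension formula recorded in the excerpt, $\dim D_S = \min\{n, n+k-\abs{S}\} = n + k - \abs{S}$ when $\abs{S} \geq k$, so its codimension in the $n$-dimensional space $\bS(\cA)$ is exactly $\abs{S} - k$. Summing over $S \in \bT(X)$ gives
\begin{equation*}
\rank X \leq \sum_{S \in \bT(X)} (\abs{S} - k) = \nu(\bT(X)),
\end{equation*}
which is the desired inequality.

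The main obstacle is justifying the reduction $\rank X \leq \sum_{S \in \bT(X)} \rank D_S$ rigorously, i.e. confirming that the codimension of an intersection is at most the sum of the codimensions of the intersectands; this is the standard subadditivity of codimension for linear subspaces, $\codim\left(\bigcap_i V_i\right) \leq \sum_i \codim V_i$, applied to the $D_S$. One must also be careful that $X \subset D_S$ precisely for $S \in \bT(X)$ (by definition of canonical presentation as the set of components), so that $X = \bigcap_{S \in \bT(X)} D_S$ and no component is omitted. A small point worth verifying is that the $D_S$ appearing are exactly the intersections of hyperplanes of $\cB(\cA)$ and carry the expected codimension $\abs{S}-k$; this follows from genericity of $\cA$, which we are assuming throughout this section. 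Equality in the final inequality is exactly the transversality condition $\rank X = \sum_{S\in\bT(X)} \rank D_S$ that characterizes very generic intersections, consistent with the discussion preceding the statement.
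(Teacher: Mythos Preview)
Your argument is correct and is essentially the paper's proof: the paper proceeds by induction on $\abs{\bT(X)}$, applying the semimodular inequality $\rank(A\cap B)+\rank(A\vee B)\leq\rank A+\rank B$ at each step, which is exactly the subadditivity of codimension you invoke directly in one stroke. The only cosmetic difference is that you package the inequality as $\codim\bigl(\bigcap_S D_S\bigr)\leq\sum_S\codim D_S$ all at once, while the paper peels off one $D_{S_m}$ at a time; both rest on $\rank D_S=\abs{S}-k$ and the identity $X=\bigcap_{S\in\bT(X)}D_S$, which you justify correctly.
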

\begin{proof}
By induction on the size of $\bT=\bT(X)$.
If $\abs{\bT}=1$, then we have $\rank X=\nu(\bT(X))$.
Assume that $\rank X'\leq \nu(\bT(X'))$ holds for every $X'$ such that $\abs{\bT(X')}=m-1$.
If $\bT=\{S_1,\dots,S_m\}$, then $X=\bigcap_{i=1}^{m-1}D_{S_i}\cap D_{S_m}$.
By the semimodularity of the rank function, 
\begin{align*}
\rank X\leq&\rank X+\rank\left(\left(\bigcap_{i=1}^{m-1}D_{S_i}\right)+ D_{S_m}\right) \\
\leq&\rank \bigcap_{i=1}^{m-1}D_{S_i}+\rank D_{S_m}\leq \nu\left(\bT\left(\bigcap_{i=1}^{m-1}D_{S_i} \right)\right)+\abs{S_m}-k=\nu(\bT(X)).\qedhere
\end{align*}
\end{proof}

In the rest of this section, we introduce the $(\bT,r )$-singularity varieties.

Let $\Arr(n,k)$ denote the set of linear isomorphism classes of essential central multiarrangements consisting of $n$ hyperplanes in $W=\bK^k$, and let $\gArr(n,k)\subset\Arr(n,k)$ denote the set of generic arrangements.
If $\cA$ is a generic arrangement, then their normal vectors $\alpha_1,\ldots,\alpha_n$ can be written as follows, by appropriate coordinate transformations and scalar products:
\begin{equation*}
\left(\alpha_1,\ldots,\alpha_n\right)=
\begin{pmatrix}
1&0&\cdots&0&0&1&a_{1,k+2}&\dots&a_{1,n}\\
0&1&&0&0&1&a_{2,k+2}&\dots&a_{2,n}\\
\vdots&&\ddots&&\vdots&\vdots&\vdots&&\vdots\\
0&0&&1&0&1&a_{k-1,k+2}&\dots&a_{k-1,n}\\
0&0&\cdots&0&1&1&1&\dots&1\\
\end{pmatrix}.
\end{equation*}
Hence, $\gArr(n,k)$ is isomorphic to a Zariski open set of a $(k-1)\times(n-k-1)$ dimensional affine space.
Note that all representatives of the isomorphism class of arrangements are written as $\left(\alpha_1,\ldots,\alpha_n\right)\in (W^\ast)^n$, and all lists of normal vectors that are transformed by the automorphism of $W$ and the scaling of each vector are included in the same equivalence class.
We have $\Arr(n,k)\subset \bK^{k\times n}/\mathrm{GL}(W)\times (\bK^\times)^n\cong\mathrm{Gr}_k(\bK^n)/(\bK^\times)^n$.
Therefore, we can treat $\Arr(n,k)$ as a quotient of the Grassmannian $\mathrm{Gr}_k(\bK^n)$ and will use Pl\"{u}cker coordinates for $\Arr(n,k)$.

\begin{defi}
Let $\bT$ be an element in $Q(n,k)$, and let $r$ be a non-negative integer.
Define
\begin{align*}
V_{(\bT,r)}&={\left\{\cA\in \Arr(n,k)\left| \rank \bigcap_{S\in \bT}D_S(\cA)\leq r\right\}\right.},\\
V^\circ_{(\bT,r)}&={V}_{(\bT,r)}\cap\gArr(n,k).
\end{align*}
We call $V_{(\bT,r)}$ the \textit{$(\bT,r )$-singularity variety} of $(\bT,r)$ if $V_{(\bT,r)}\neq \emptyset$ and $V_{(\bT,r)}\neq \Arr(n,k)$.
\end{defi}
Note that $V_{(\bT,r)}$ is defined by the vanishing of all the $r$-minors of the matrix $(\alpha_C)_{C\subset S\in \bT}$, so $V_{(\bT,r)}$ is an algebraic set in $\Arr(n,k)$. However, $V_{(\bT,r)}$ is not always irreducible.

We know that $\cA\in V_{(\bT_0,r)}$ if $\bT([\cA^t])=\bT_0$ for some $t\in\bK^n$.
Therefore, if a coefficient field is the real or complex numbers and $\bT\in P(n,k)$, then we have $V_{(\bT,\nu(\bT))}=\Arr(n,k)$.
Moreover, the following proposition holds.
\begin{prop}\label{prop:NVGV}
Let $\bT,\bT'\in Q(n,k)$ and let $r$ and $r'$ be non-negative integers.
If $r\leq r'$ and $\bT\geq\bT'$, then $V_{(\bT,r)}\subset V_{(\bT',r')}$.
\end{prop}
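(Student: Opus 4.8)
The plan is to prove the inclusion pointwise: I will fix an arbitrary $\cA\in V_{(\bT,r)}$ and show $\cA\in V_{(\bT',r')}$, reducing everything to a monotonicity property of the subspaces $D_S(\cA)$ together with the behaviour of rank (codimension in $\bS(\cA)$) under inclusion of linear subspaces. So the main objects to control are the two intersections $\bigcap_{S\in\bT}D_S(\cA)$ and $\bigcap_{S'\in\bT'}D_{S'}(\cA)$ and their ranks.

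First I would record the elementary fact that enlarging the index set shrinks $D_S$: if $S'\subset S$, then $D_S(\cA)\subset D_{S'}(\cA)$. This is immediate from the definition $D_S=\{\cA^t\mid \bigcap_{i\in S}H^t_i\neq\emptyset\}$, since $S'\subset S$ gives $\bigcap_{i\in S}H^t_i\subset\bigcap_{i\in S'}H^t_i$, so nonemptiness of the former forces nonemptiness of the latter. Next I would invoke the hypothesis $\bT\geq\bT'$, that is $\bT'\leq\bT$, which by definition of the order on $Q(n,k)$ means that for every $S'\in\bT'$ there is some $S\in\bT$ with $S'\subset S$. Combining this with the previous step yields
\[
\bigcap_{S\in\bT}D_S(\cA)\ \subset\ \bigcap_{S'\in\bT'}D_{S'}(\cA),
\]
because any translation lying in every $D_S$ with $S\in\bT$ also lies in each $D_{S'}$ via the inclusion $D_S\subset D_{S'}$ for a suitable $S\supset S'$.

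Finally, both sides are linear subspaces of $\bS(\cA)\cong\bK^n$, where rank is codimension; inclusion of subspaces reverses codimension, so the ranks satisfy
\[
\rank\bigcap_{S'\in\bT'}D_{S'}(\cA)\ \leq\ \rank\bigcap_{S\in\bT}D_S(\cA)\ \leq\ r\ \leq\ r',
\]
the middle inequality being exactly the assumption $\cA\in V_{(\bT,r)}$ and the last being $r\leq r'$. Hence $\cA\in V_{(\bT',r')}$, and since $\cA$ was arbitrary, $V_{(\bT,r)}\subset V_{(\bT',r')}$.

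I do not expect a serious obstacle here; the argument is short. The only point requiring care is keeping the directions consistent: I must verify that $S'\subset S$ produces $D_S\subset D_{S'}$ (and not the reverse), and that inclusion of linear subspaces decreases rank (\ie codimension), so that the single chain of inequalities closes in the correct direction and lands on $r'$.
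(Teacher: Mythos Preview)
Your proof is correct and follows essentially the same approach as the paper: both establish the inclusion $\bigcap_{S\in\bT}D_S\subset\bigcap_{S'\in\bT'}D_{S'}$ from the order relation $\bT\geq\bT'$, then use that rank reverses inclusions together with $r\leq r'$. Your version simply spells out in more detail why $S'\subset S$ implies $D_S\subset D_{S'}$, which the paper leaves implicit.
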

\begin{proof}
If $\bT\geq\bT'$, then it holds that $\bigcap_{S\in \bT}D_S\subset \bigcap_{S\in \bT'}D_S$ and so $\rank \bigcap_{S\in \bT}D_S\geq \rank \bigcap_{S\in \bT'}D_S$.
Hence, we have
\begin{equation*}
{\left\{\cA\in \Arr(n,k)\left| \rank \bigcap_{S\in \bT}D_S(\cA)\leq r\right\}\right.}
\subset{\left\{\cA\in \Arr(n,k)\left| \rank \bigcap_{S\in \bT'}D_S(\cA)\leq r'\right\}\right.}.\qedhere
\end{equation*}
\end{proof}

There exists a rank decreasing surjection $P(n,k)\to\cL(\cB(\cA));\bT\mapsto\bigcap_{S\in\bT}D_S$.
Hence, we have $\nu(\bT)\geq \rank \bigcap_{S\in\bT}D_S$ for $\bT\in P(n,k)$.
Therefore, for an intersection $X$ in $\cL(\cB(\cA))$, 
\begin{equation*}
\rank X<\min\left\{\nu(\bT')\left|\bT'\in P(n,k), \bT(X)<\bT'\right\}\right.
\end{equation*}
holds.
In particular, if $X$ is very generic, then $\min\left\{\nu(\bT')\left| \bT'\in P(n,k), \bT(X)<\bT'\right\}\right. =\rank X+1$.
We have $V^\circ_{(\bT,r)}=\gArr(n,k)$ if $r\geq \min\left\{\nu(\bT')\left|\bT'\in P(n,k), \bT<\bT'\right\}\right.$.

For simplicity, we write $V_{\bT}$ and $V^\circ_{\bT}$ as $V_{(\bT,r)}$ and $V^\circ_{(\bT,r)}$ if $\min\left\{\nu(\bT')\left| \bT'\in P(n,k), \bT<\bT'\right\}\right.  =r+1$.
Then we have $V_\bT=\Arr(E,k)$ if $\bT\in P(E,k)$ since $\min\left\{\nu(\bT')\left| \bT'\in P(E,k), \bT<\bT'\right\}\right. =\nu(\bT)+1$.
Moreover, if $\cA\in V_{\bT}$ for some $\bT\in Q(E,k)\setminus P(E,k)$, then $\cA$ is non-very generic.
An example where a $(\bT,r )$-singularity variety is not equal to $V_{\bT}$ can be found in Example \ref{Exam:SeYa}; take $\cA$ and $X$ as in Example \ref{Exam:SeYa}, then $\min\left\{\nu(\bT')\left| \bT'\in P(n,k), \bT<\bT'\right\}\right. =6=\rank X+2$ holds.

\begin{exam}\label{Exam:NVGV}\mbox{}
Consider the same intersection as in Example \ref{Exam:Crapo}.
For $n=6,k=2$, $\bT=\{123,156,246,345\}$ is in $Q(6,2)$ but not in $P(6,2)$.
The element larger than $\bT$ in $P(6,2)$ is only $\{123456\}$, hence $\min\left\{\nu(\bT')\left| \bT'\in P(n,k), \bT< \bT'\right\}\right.=6-2=4$. 
Then 
\begin{align*}
V_{(\bT,r)}=&{\{\cA\in\Arr(6,2)\mid \rank D_{123}(\cA)\cap D_{156}(\cA)\cap D_{246}(\cA)\cap D_{345}(\cA)\leq r\}}
\end{align*}
If $\min\left\{\nu(\bT')\left| \bT'\in P(n,k), \bT<\bT'\right\}\right. =r+1$, then we have
$V_\bT=V_{(\bT,3)}=\{\cA\in\Arr(6,2)\mid \Delta_{16}\Delta_{24}\Delta_{35}-\Delta_{15}\Delta_{26}\Delta_{34}=0\}$ where $\alpha_i$ is a normal vector of $H_i$ and $\Delta_{ij}=\det(\alpha_i\alpha_j)$.

Similarly, for $n=7,k=2$, consider $\bT=\{123,147,156,246,357\}$ in $Q(7,2)$ but not in $P(7,2)$.
Then $\min\{\nu(\bT')\mid \bT'\in P(n,k), \bT< \bT'\}=7-2=5$.
We have $V_\bT=V_{(\bT,4)}$ and the defining equation of $V_{\bT}$ is 
\begin{equation}\label{Eq:quint}
\Delta_{14}\Delta_{16}\Delta_{27}\Delta_{35}-\Delta_{17}\Delta_{15}\Delta_{26}\Delta_{34}=0.
\end{equation}
The author shows this is the defining equation in \cite{SaSe24}.
\end{exam}

We now explain the importance of considering $V_\bT$; that is, the BBA-condition is a necessary and sufficient condition for determining very generic arrangements.

\begin{lemma}\label{lemma:NonVeryGeneric}
Let $\cA\in\gArr(n,k)$.
\begin{enumerate}
\item Let $X$ be a non-very generic intersection in $\cL(\cB(\cA))$. If $X$ satisfies the BBA-condition, then there exists an intersection $Y\in \cL(\cB(\cA))$ which fails the BBA-condition and satisfies $X<Y$.
\item A generic arrangement $\cA$ is non-very generic if and only if there exists a non-very generic translation $\cA^t$.
\end{enumerate}
\end{lemma}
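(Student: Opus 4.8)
The plan is to reduce both parts to statements about the geometric lattice $\cL(\cB(\cA))$ and the poset $P(n,k)$, exploiting three facts already at hand: $P(n,k)$ is graded by $\nu$, $\cL(\cB(\cA))$ is a geometric (hence graded) lattice with top $D_{[n]}$, and there is a rank-decreasing surjection $\psi\colon P(n,k)\to\cL(\cB(\cA))$, $\bT\mapsto\bigcap_{S\in\bT}D_S$, compatible with the Proposition $\rank X\le\nu(\bT(X))$. Throughout I would use the key transport fact that for $Y>X$ in $\cL(\cB(\cA))$ one has $\bT(Y)>\bT(X)$ strictly, since $\bT(Y)=\bT(X)$ would force $Y=\bigcap_{S\in\bT(Y)}D_S=X$.

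For part (1) I would argue by downward induction on $\rank X$. The top $\hat 1=D_{[n]}$ has $\bT(\hat 1)=\{[n]\}$ and $\rank\hat 1=n-k=\nu(\{[n]\})$, so it is very generic and the statement is vacuous there; consequently any non-very generic $X$ satisfies $X\neq\hat 1$ and has at least one cover $Y\gtrdot X$. Since $X$ satisfies the BBA-condition we have $\bT(X)\in P(n,k)$, and since $X$ is non-very generic the Proposition forces the strict inequality $\rank X<\nu(\bT(X))$. Now inspect the covers $Y\gtrdot X$. If some cover fails the BBA-condition, we are done. Otherwise every cover satisfies it; if moreover some cover $Y$ is non-very generic, then $Y$ satisfies the BBA-condition, is non-very generic, and has $\rank Y>\rank X$, so the induction hypothesis yields $Y'>Y>X$ failing the condition. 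The remaining case---every cover very generic---I would rule out: such a $Y$ has $\bT(Y)\in P(n,k)$, $\bT(X)<\bT(Y)$, and $\nu(\bT(Y))=\rank Y=\rank X+1$; but the grading of $P(n,k)$ by $\nu$ together with $\nu(\bT(X))\ge\rank X+1$ gives $\nu(\bT(Y))>\nu(\bT(X))\ge\rank X+1$, i.e. $\nu(\bT(Y))\ge\rank X+2$, contradicting $\nu(\bT(Y))=\rank X+1$.

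For part (2) I would first pass from translations to intersections: every $\cA^t$ is a representative of the unique minimal flat $[\cA^t]\in\cL(\cB(\cA))$ containing it, and of no other flat, so $\cA^t$ is very generic precisely when $[\cA^t]$ is a very generic intersection; hence a non-very generic translation exists iff some intersection is non-very generic. It thus suffices to prove that $\cA$ is very generic iff every intersection is very generic. The forward implication follows from $\cL(\cB(\cA))\cong P(n,k)$ once one checks the isomorphism is the rank-preserving map $\bT$, which is forced by comparing maximal chains through the common top $D_{[n]}$. For the converse, assuming every $X$ is very generic, I would show $\bT$ is an order isomorphism: it is injective (as $\bT(X)$ determines $X=\bigcap_{S\in\bT(X)}D_S$) and rank-preserving, and surjectivity comes from feeding each $\bT_0\in P(n,k)$ through $\psi$ and squeezing $\rank\psi(\bT_0)\le\nu(\bT_0)\le\nu(\bT(\psi(\bT_0)))=\rank\psi(\bT_0)$, so that the grading of $P(n,k)$ forces $\bT(\psi(\bT_0))=\bT_0$.

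The main obstacle is the exclusion of the ``all covers very generic'' case in part (1): the whole induction turns on converting the strict order $\bT(X)<\bT(Y)$ and the grading of $P(n,k)$ into the numerical contradiction above, so I must be careful that covers genuinely produce strict $\nu$-increases. A secondary technical point, used silently in part (2), is the existence of a representative $\cA^t$ for every flat; I would dispatch this using the genericity and field hypotheses already in force.
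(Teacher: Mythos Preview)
Your argument is correct and rests on the same two ingredients the paper uses: for part (1), that $\bT$ is strictly order-preserving on $[X,\hat 1]$ into the $\nu$-graded poset $P(n,k)$, turning $\nu(\bT(X))>\rank X$ into a length/rank contradiction; for part (2), the squeeze $\rank\psi(\bT_0)\le\nu(\bT_0)\le\nu(\bT(\psi(\bT_0)))=\rank\psi(\bT_0)$ via the surjection $\psi$. The only cosmetic difference is packaging: in (1) the paper runs the contradiction along an entire maximal chain through $X$ at once (its length in $\cL(\cB(\cA))$ must be $n-k+1$, but the chain of $\bT$-images in $P(n,k)$ forces it to be shorter), whereas you unroll the same count one cover at a time via downward induction; in (2) the paper phrases your squeeze as a proof by contradiction starting from a hypothetical $\bT_0\in P(n,k)$ missed by $\bT$, while you prove surjectivity of $\bT$ directly. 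Your treatment of the forward direction in (2) is more explicit than the paper's (which omits it), though the sketch ``forced by comparing maximal chains'' would benefit from the cardinality argument that $\lvert\cL(\cB(\cA))\rvert=\lvert P(n,k)\rvert$ makes the rank-decreasing surjection $\psi$ a rank-preserving bijection.
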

\begin{proof}
\begin{enumerate}
\item If there is no such intersection $Y$, then the length of every maximal chain in $\cL(\cB(\cA))$ containing $X$ is at most $n-k$. This contradicts that the length of a maximal chain of a geometric lattice of rank $n-k$ is $n-k+1$.
\item Suppose that $\cA$ is non-very generic, but every intersection in $\cL(\cB(\cA))$ is very generic.
Then there is a proper subposet $P'$ of $P(n,k)$ with the same grading $\nu$ such that $P'\cong \cL(\cB(\cA))$.
In other words, there exists $\bT_0\in P(n,k)\setminus P'$.
Let $\bT_1$ be the canonical presentation of $\bigcap_{S\in \bT_0}D_S$,
then it holds that $\nu(\bT_1)=\rank \bigcap_{S\in \bT_0}D_S<\nu(\bT_0)$.
Since $\bT_0$ is not a canonical presentation of $X=\bigcap_{S\in \bT_0}D_S$, there exists a canonical presentation $\bT_1$ of $X$ satisfying the BBA-condition with $\bT_1>\bT_0$.
This leads to $\rank X<\nu(\bT_0)<\nu(\bT_1)$.
However, this contradicts $\rank X=\nu(\bT_1)$.
\end{enumerate}
\end{proof}

The above lemma and Proposition \ref{prop:NVGV} yield the following theorem.

\begin{thm}\label{thm:NVGV2}
The set of generic but non-very generic arrangements in $W$ with $n$ hyperplanes is
\begin{equation*}
\bigcup_{\bT\in Q(n,k)\setminus P(n,k), \nu(\bT)=\abs{\bigcup_{S\in \bT}S}-k} V^\circ_{\bT}.
\end{equation*}
\end{thm}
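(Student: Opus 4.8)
The plan is to prove the theorem by a double inclusion between the set $N$ of generic but non-very generic arrangements and the union $U=\bigcup V^\circ_{\bT}$ taken over all \emph{critical} families, i.e.\ those $\bT\in Q(n,k)\setminus P(n,k)$ with $\nu(\bT)=\abs{\bigcup_{S\in\bT}S}-k$. Throughout I would write $r(\bT)$ for the integer determined by $r(\bT)+1=\min\{\nu(\bT')\mid\bT'\in P(n,k),\ \bT<\bT'\}$, so that $V_{\bT}=V_{(\bT,r(\bT))}$ in the paper's convention.

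First I would establish $U\subseteq N$. Let $\bT$ be critical and $\cA\in V^\circ_{\bT}$, and suppose for contradiction that $\cA$ is very generic, so $\cL(\cB(\cA))\cong P(n,k)$ and every intersection of $\cB(\cA)$ is very generic. Put $X=\bigcap_{S\in\bT}D_S(\cA)$. Since $X\subset D_S$ for each $S\in\bT$, every $S$ lies in a component of $X$, whence $\bT\le\bT(X)$; as $\bT\notin P(n,k)$ while $\bT(X)\in P(n,k)$, this is strict, $\bT<\bT(X)$. Very genericity of $X$ gives $\rank X=\nu(\bT(X))\ge\min\{\nu(\bT')\mid\bT'\in P(n,k),\ \bT<\bT'\}=r(\bT)+1$, contradicting $\rank X\le r(\bT)$. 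Hence $\cA$ is non-very generic. This step needs only $\bT\notin P(n,k)$, not criticality.

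For $N\subseteq U$ I would argue as follows. Let $\cA$ be non-very generic. By Lemma~\ref{lemma:NonVeryGeneric}(2) there is a non-very generic translation $\cA^t$, a representative of a non-very generic intersection $X$. If $X$ satisfies the BBA-condition, Lemma~\ref{lemma:NonVeryGeneric}(1) produces $Y>X$ failing it; replacing $X$ by $Y$ I may assume $\bT(X)\notin P(n,k)$. Because $\bT(X)$ is realized as a canonical presentation, the inequality $\rank X<\min\{\nu(\bT')\mid\bT'\in P(n,k),\ \bT(X)<\bT'\}=r(\bT(X))+1$ recorded before the lemma gives $\rank\bigcap_{S\in\bT(X)}D_S(\cA)=\rank X\le r(\bT(X))$, that is, $\cA\in V^\circ_{\bT(X)}$.

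It then remains to replace the index $\bT(X)$ by a critical one, and this is where I expect the real difficulty. The plan is to pass to a sub-family $\bT^\ast\subseteq\bT(X)$ minimal among those violating condition (P), and to show that the arithmetic of $\nu$ together with (Q2) forces $\nu(\bT^\ast)=\abs{\bigcup_{S\in\bT^\ast}S}-k$, so that $\bT^\ast$ is critical. Since $\bT^\ast\le\bT(X)$ we have $\bigcap_{S\in\bT^\ast}D_S\supseteq\bigcap_{S\in\bT(X)}D_S$, hence $\rank\bigcap_{S\in\bT^\ast}D_S\le\rank X$; provided the thresholds agree, $r(\bT^\ast)=r(\bT(X))$, Proposition~\ref{prop:NVGV} yields $V^\circ_{\bT(X)}\subseteq V^\circ_{\bT^\ast}$ and therefore $\cA\in V^\circ_{\bT^\ast}\subseteq U$. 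The hard part will be exactly this reduction: proving that a minimal (P)-violating sub-family is critical and that passing to it preserves the singularity threshold $\min\{\nu(\bT')\mid\bT'\in P(n,k),\ \bT<\bT'\}$. Equivalently, one must show that the rank degeneration responsible for non-very genericity already localizes on a critical sub-configuration, which is precisely the role played by the indexing condition $\nu(\bT)=\abs{\bigcup_{S\in\bT}S}-k$.
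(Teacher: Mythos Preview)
Your overall strategy matches the paper's: one inclusion is the earlier observation that $\cA\in V^\circ_\bT$ with $\bT\notin P(n,k)$ forces non-very genericity, and for the other you invoke Lemma~\ref{lemma:NonVeryGeneric} to produce an intersection $X$ with $\bT(X)\notin P(n,k)$, whence $\cA\in V^\circ_{\bT(X)}$, and then reduce the index to a critical one.

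The difference is in that last reduction, and here your route is genuinely harder than the paper's. You propose passing to a \emph{sub-family} $\bT^\ast\subseteq\bT(X)$ that is minimal among those violating~(P), and you rightly flag as ``the hard part'' that such a $\bT^\ast$ need not satisfy $\nu(\bT^\ast)=\lvert\bigcup_{S\in\bT^\ast}S\rvert-k$: minimality only yields the inequality $\nu(\bT^\ast)\ge\lvert\bigcup_{S\in\bT^\ast}S\rvert-k$, and closing the gap via~(Q2) is not obvious. The paper sidesteps this completely by a different move in the poset $Q(n,k)$: rather than discarding sets from $\bT(X)$, it \emph{shrinks} them, replacing some $S\in\bT(X)$ by strictly smaller $S'\subset S$ (still of size $\ge k+1$), to obtain $\bT_0<\bT(X)$ with $\nu(\bT_0)=\lvert\bigcup_{S\in\bT_0}S\rvert-k$ \emph{by construction}. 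This $\bT_0$ then lies in $Q(n,k)\setminus P(n,k)$ automatically (the full family witnesses failure of~(P) with equality), so criticality comes for free and no combinatorial lemma about minimal violators is needed. The final inclusion $V^\circ_{\bT(X)}\subset V^\circ_{\bT_0}$ is then read off from Proposition~\ref{prop:NVGV} applied to $\bT_0\le\bT(X)$, without the separate threshold comparison $r(\bT^\ast)=r(\bT(X))$ that you anticipate having to prove. In short: your plan is sound, but choosing to pass to a sub-family rather than to shrink the sets makes the reduction step substantially harder than it needs to be.
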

\begin{proof}
    Let $\cA\in\Arr^\circ(n,k)$ be a non-very generic arrangement.
    By Lemma \ref{lemma:NonVeryGeneric}, there exists a non-very generic intersection $X\in\cL(\cB(\cA))$ and $\bT(X)\notin P(n,k)$.
    Hence, we obtain $\cA\in V^\circ_{\bT(X)}$.
    If $\nu(\bT(X))>\abs{\bigcup_{S\in \bT(X)}S}-k$, we can obtain $\bT_0$ such that $\bT_0<\bT(X)$ and $\nu(\bT_0)=\abs{\bigcup_{S\in \bT_0}S}-k$ by replacing some elements $S$ in $\bT$ with smaller elements $S'\subset S$.
    It follows from  Proposition \ref{prop:NVGV} that $\cA\in V^\circ_{\bT(X)}\subset V^\circ_{\bT_0}$.
\end{proof}

%%%%%%%%%%%%%%%%%%%%%%%%%%%%%%%%%%%%%%%%%%%%%%%%%%%%%%%

\section{Generalized Crapo configuration; wheels}\label{Sec:wheel}
We discuss non-very generic line arrangements, the generalized Crapo configurations, and the corresponding $(\bT,r)$-singularity varieties.
In this section, we assume $n\geq 3$.

We define two families of subsets of $[2n]$ as follows:
\begin{align*}
\bW_{2n}=&\left\{\{2i-1,2i,2i+1\}\subset [2n]\mid i\in [n] \right\}\cup\{\{2,4,\dots,2n\}\},\\
{\bW^\dagger}_{2n}=&\left\{\{2i,2i+1,2i+2\}\subset [2n]\mid i\in [n] \right\}\cup\{\{1,3,\dots,2n-1\}\}.
\end{align*}
Here, we identify $[2n]$ with the cyclic group $\bZ/2n\bZ$.
We call $\bW_{2n}$ a $2n$-\textit{wheel}, and we call ${\bW^\dagger}_{2n}$ its twin.
Furthermore, families $\bW$ and $\bW^\dagger$ of subsets of a finite set $E$ of cardinality $2n$, induced by a bijection $[2n]\to E$, are called a $2n$-wheel and its twin.

A $2n$-wheel arrangement $\cW_{2n}$ is an affine line arrangement $\{H_1,\dots,H_{2n}\}$ with the condition that  $\bigcap_{i\in S}H_i\neq \emptyset$ and $H_j\cap \bigcap_{i\in S}H_i=\emptyset$ for all $j\notin S$ for each $S \in \bW_{2n}$ (cf. Figure \ref{fig:2n-Wheel}).
This name comes from the fact that the intersection graph of $\bW_{2n}$ forms a wheel graph.
It is straightforward to verify that $\bW_{2n}\in Q(2n,2)$ and $\bW_{2n}\notin P(2n,2)$.
Since $\bW_{2n}\setminus\{123\}$ satisfies the BBA-condition, we have
\begin{equation*}
2n-3=\nu(\bW_{2n}\setminus\{123\})<\min\{\nu(\bT)\mid \bT\in P(2n,2),\bW_{2n}\}\leq 2n-2.
\end{equation*}
Hence, we obtain $\min\left\{\nu(\bT')\left| \bT'\in P(n,k), \bW_{2n}< \bT'\right\}\right.-1=4$
and $V_{\bW_{2n}}=V_{(\bW_{2n},2n-3)}$.

\begin{figure}[h]
\centering
\begin{tabular}{cc}
	\begin{minipage}{0.4\linewidth}
	\centering
	\begin{tikzpicture}[scale=1]
		\foreach \nlines in {6}{\foreach \ang in {39}{
		\foreach \n in {2,4,...,12}{
		\draw[rotate=(\n/2-1)*\ang](-0.5,0) --node[pos=1.1] {$H^t_{\n}$}(2.2,0);
		}
		\foreach \m in {1,3,...,11}{
		\draw[rotate=((\m-1)/2)*\ang](2,0)--node[auto=left] {$H^t_{\m}$} (2*cos{-\ang},2*sin{-\ang});
		}
		\draw[rotate=-\ang](-0.5,0) --node[pos=1.1] {$H_{2n}$}(2.2,0);
	    \node[rotate=-\ang] (start) at (2*cos{(-7-\ang)},2*sin{(-7-\ang)}){$\vdots$};
	    \node[rotate=(\nlines-1)*\ang+180] (end) at (2*cos{(7+(\nlines-1)*\ang)},2*sin{(7+(\nlines-1)*\ang)}){$\vdots$};}}
	\end{tikzpicture}
	\caption{$2n$-wheel arrangement}\label{fig:2n-Wheel}
	\end{minipage}&
	\begin{minipage}{0.4\linewidth}
	\centering
	\begin{tikzpicture}[scale=1.2]
		\draw (-0.2,0)--(3.5,0) node[pos=1.1]{$H^t_{2n+2}$};
		\draw (-0.2,-0.1)--(2.4,1.2) node[pos=1.1]{$H^t_{2}$};
		\draw (-0.2,0.1)--(2.4,-1.2) node[pos=1.1]{$H^t_{2n}$};
		\draw (1.8,1.2)--(3.3,-0.3) node[pos=1.1]{$H^t_{1}$};
		\draw (1.8,-1.2)--(3.3,0.3) node[pos=1.1]{$H^t_{2n+1}$};
		\draw (1.6,1.1)--(2.4,0.9) node[pos=1.3]{$H^t_{3}$};
		\draw (1.6,-1.1)--(2.4,-0.9) node[pos=1.5]{$H^t_{2n-1}$};
		\draw (2,-1.3)--(2,1.3) node[pos=1.1]{$H_{\star}$};
		\node[rotate=-30] (start) at (1.5,1.5){$\dots$};
		\node[rotate=30] (end) at (1.5,-1.5){$\dots$};
	\end{tikzpicture}
	\caption{add $H_\star$ to $2n+2$-wheel}\label{fig:thmwheel} 
	\end{minipage}
	\end{tabular}
\end{figure}

\begin{thm}\label{thm:wheel}
Let $\cA=\{H_1,\ldots,H_{2n}\}$ be a generic line arrangement consisting of $2n$ lines in $\bK^2$.
The arrangement $\cA$ admits a translation into the $2n$-wheel arrangement if and only if $\cA$ satisfies the equation:
\begin{equation}\label{n-wheelEq}
\prod_{i=1}^n\Delta_{2i,2i-1}-\prod_{i=1}^n{\Delta_{2i,2i+1}}=0.
\end{equation}
In other words, the defining equation of $V^\circ_{\bW_{2n}}$ is Equation \eqref{n-wheelEq}.
\end{thm}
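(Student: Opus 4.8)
My plan is to argue geometrically, converting the rank condition into a closing-up condition for a polygon inscribed in the pencil of even lines. First I would record what the statement really asks. By the discussion preceding the theorem, $\cA\in V^\circ_{\bW_{2n}}$ means $\rank\bigcap_{S\in\bW_{2n}}D_S\le 2n-3$, and since the only element of $P(2n,2)$ strictly above $\bW_{2n}$ is $\{[2n]\}$ with $\nu=2n-2$, a drop by one is equivalent to $\bigcap_{S\in\bW_{2n}}D_S\supsetneq D_{[2n]}$. Concretely this says $\cA$ has a translate $\cA^t$ realizing all the wheel concurrences $\{2i-1,2i,2i+1\}$ and $\{2,4,\dots,2n\}$ while the $2n$ lines are not all concurrent, i.e.\ a genuine $2n$-wheel. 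So it suffices to decide when such a $\cA^t$ exists.

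Next I would build the wheel by hand. Such a translate has a hub $O=\bigcap_i H^t_{2i}$ and spoke vertices $P_i=H^t_{2i-1}\cap H^t_{2i}\cap H^t_{2i+1}$. Fixing $O$ and recalling that $H^t_{2i}$ passes through $O$ with direction $\alpha_{2i}^{\perp}$, I write $P_i=O+s_i\,\alpha_{2i}^{\perp}$. The defining property of the odd line $H^t_{2i+1}$ is that it has direction $\alpha_{2i+1}^{\perp}$ and contains both $P_i$ and $P_{i+1}$; the collinearity $\det\!\bigl(P_{i+1}-P_i,\ \alpha_{2i+1}^{\perp}\bigr)=0$ reduces, using that a $90^\circ$ rotation preserves determinants so that $\det(\alpha_a^{\perp},\alpha_b^{\perp})=\Delta_{ab}$, to the linear recursion
\[
s_{i+1}\,\Delta_{2i+2,2i+1}=s_i\,\Delta_{2i,2i+1}.
\]
Because $\cA$ is generic, every $\Delta_{ab}\ne 0$, so this determines each $s_{i+1}$ from $s_i$ and hence the whole polygon from $s_1$.

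Then I would read off the closing condition. Traversing the cycle once, a nonzero consistent solution $(s_i)_{i\in\bZ/n}$ exists if and only if $\prod_{i=1}^n\Delta_{2i,2i+1}=\prod_{i=1}^n\Delta_{2i+2,2i+1}$; reindexing $i\mapsto i-1$ on the right turns this into $\prod_{i=1}^n\Delta_{2i,2i+1}=\prod_{i=1}^n\Delta_{2i,2i-1}$, which is exactly Equation \eqref{n-wheelEq}. If \eqref{n-wheelEq} holds, then any $s_1\ne0$ yields a one-parameter family of points $P_i\ne O$, pairwise distinct by genericity, hence a genuine wheel; if it fails, the recursion forces $s_1=0$, i.e.\ all $P_i=O$, so the only translate with these concurrences is the totally concurrent one and no genuine wheel exists. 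This gives both implications and identifies \eqref{n-wheelEq} as the defining equation of $V^\circ_{\bW_{2n}}$.

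The step I expect to be most delicate is the reduction in the first paragraph: I must verify that the rank of $\bigcap_{S\in\bW_{2n}}D_S$ drops by \emph{exactly} one, not more, precisely on the genuine-wheel locus, so that the one-parameter freedom found above matches $\rank=2n-3$, and that the hypothesis that $\cA$ is generic is exactly what guarantees $\Delta_{ab}\ne0$ together with the distinctness of the constructed $P_i$. An alternative, more computational proof writes $\rank X$ as the rank of the $2n\times(2n-2)$ matrix whose columns are the spoke normals $\alpha_{\{2i-1,2i,2i+1\}}$ together with a basis of the normal space of $D_{\{2,4,\dots,2n\}}$; eliminating the odd coordinates produces the same cyclic bidiagonal determinant \eqref{n-wheelEq}, but one must then check that the residual even-coordinate relations are consequences of \eqref{n-wheelEq} and the Grassmann--Pl\"{u}cker relations among the $\Delta_{ab}$, which is the harder bookkeeping that the geometric argument sidesteps.
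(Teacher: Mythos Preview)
Your argument is correct and takes a genuinely different route from the paper. The paper proceeds by induction on $n$: the base case $n=3$ is Crapo's configuration (Example~\ref{Exam:Crapo}), and for the step from $2n$ to $2n+2$ it introduces an auxiliary line $H_\star$ through two of the spoke vertices, which decomposes the $(2n+2)$-wheel into a $6$-wheel and a $2n$-wheel sharing $H_\star$; the two resulting equations then combine to yield \eqref{n-wheelEq}. Your approach is instead a direct, one-shot construction: parametrize the spoke vertices as $P_i=O+s_i\alpha_{2i}^{\perp}$ along the even lines through the hub, read off the recursion $s_{i+1}\Delta_{2i+2,2i+1}=s_i\Delta_{2i,2i+1}$ from collinearity with the odd line, and identify \eqref{n-wheelEq} as the cyclic closing condition. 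This is more elementary and uniform in $n$, and it makes transparent why the defining equation is a single binomial in the $\Delta_{ab}$; the paper's inductive splitting, on the other hand, foreshadows the degeneration mechanism developed in Section~\ref{Sec:Degen}.

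Two minor remarks. Your claim that $\{[2n]\}$ is \emph{the only} element of $P(2n,2)$ strictly above $\bW_{2n}$ is stronger than what you actually use; all that is needed is $\min\{\nu(\bT'):\bT'\in P(2n,2),\ \bW_{2n}<\bT'\}=2n-2$, which the paper records just before the theorem. And your worry about the rank dropping by ``exactly one'' is unnecessary for the statement: since $D_{[2n]}\subset X:=\bigcap_{S\in\bW_{2n}}D_S$ always and $\rank D_{[2n]}=2n-2$, one has $\rank X\le 2n-3$ if and only if $X\supsetneq D_{[2n]}$, and your construction exhibits a point of $X\setminus D_{[2n]}$ precisely when \eqref{n-wheelEq} holds. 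The theorem concerns the locus $\{\rank X\le 2n-3\}$, not $\{\rank X=2n-3\}$, so nothing further is required.
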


When $n=3$, this equation appears different from Equation \eqref{Eq:quad}, but we have that these equations are consistent with the three-terms Pl\"ucker relation: $\Delta_{ab}\Delta_{cd}-\Delta_{ac}\Delta_{bd}+\Delta_{ad}\Delta_{bc}=0$.
The equivalence between $\cA$ satisfying Equation \eqref{Eq:quad} and the existence of a non-very generic intersection whose canonical presentation is $\{123,156,246,345\}$ in $\cL(\cB(\cA))$ still holds even if $H_1=H_4$, $H_2=H_5$, or $H_3=H_6$.
It can be proved similarly to when $\cA$ is generic.

\begin{proof}
We prove the theorem by induction on $n$.
The case $n=3$ has already been proven.
Suppose that the claim holds for $n$.

First, suppose a translation $\cA^t$ admitting a $(2n+2)$-wheel arrangement exists.
Then, let $H_\star$ be the line through $H^t_1\cap H^t_2\cap H^t_3$ and $H^t_{2n-1}\cap H^t_{2n}\cap H^t_{2n+1}$.
Let $\alpha_\star$ be the normal vector of $H_\star$.
Now, $\{\{\star,1,2\},\{\star,2n,2n+1\},\{1,2n+1,2n+2\},\{2,2n,2n+2\}\}$ forms a $6$-wheel and $\bW_{2n+2}\setminus\{\{1,2,3\},\{2n-1,2n,2n+1\},\{1,2n+1,2n+2\},\{2,4,\dots,2n+2\}\}\cup\{\{\star,2,3\},\{2n-1,2n,\star\},\{2,4,\dots,2n\}\}$ forms a $2n$-wheel (cf. Figure \ref{fig:thmwheel}).

Then, by assumption,
\begin{equation}\label{n-wheelEq2}
    \frac{\det(\alpha_2\alpha_\star)}{\Delta_{2,3}}\frac{\Delta_{2n,2n-1}}{\det(\alpha_{2n}\alpha_\star)}\prod_{i=2}^{n-1}\frac{\Delta_{2i,2i-1}}{\Delta_{2i,2i+1}}=1,
\end{equation}
and
\begin{equation*}
    {\det(\alpha_2\alpha_\star)\Delta_{2n,2n+1}\Delta_{2n+2,1}}-{\Delta_{2,1}\det(\alpha_{2n}\alpha_\star)\Delta_{2n+2,2n+1}}=0
\end{equation*}
is equivalent to
\begin{equation}\label{n-wheelEq3}
	\frac{\det(\alpha_2\alpha_\star)}{\det(\alpha_{2n}\alpha_\star)}=\frac{\Delta_{2,1}\Delta_{2n+2,2n+1}}{\Delta_{2n,2n+1}\Delta_{2n+2,1}}
\end{equation}
holds. Combining them yields the given equation \eqref{n-wheelEq}.

Conversely, assume that Equation \eqref{n-wheelEq} holds.
Taking $\alpha_\star$ such that Equation \eqref{n-wheelEq3} is satisfied, we see from assumption that Equation \eqref{n-wheelEq2} is also satisfied.
Equation \eqref{n-wheelEq2} provides an arrangement isomorphic to the $2n$-wheel, so removing $H_\star$ gives us the arrangement $\cW_{2n+2}$.
\end{proof}

\begin{coro}
The $(\bW_{2n},2n-3)$-singularity variety and its twin are the same, i.e., $V_{\bW_{2n}}=V_{{\bW^\dagger}_{2n}}$.
\end{coro}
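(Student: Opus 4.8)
The plan is to show that the defining equation of $V_{\bW_{2n}}$ given in Theorem \ref{thm:wheel} is symmetric enough to coincide with the defining equation of $V_{{\bW^\dagger}_{2n}}$, so that the two singularity varieties are literally cut out by the same polynomial. First I would observe that the twin ${\bW^\dagger}_{2n}$ is obtained from $\bW_{2n}$ by the cyclic shift $i\mapsto i+1$ on $[2n]=\bZ/2n\bZ$: indeed the triples $\{2i-1,2i,2i+1\}$ become $\{2i,2i+1,2i+2\}$ and the ``spoke'' set $\{2,4,\dots,2n\}$ of even indices becomes $\{1,3,\dots,2n-1\}$, the set of odd indices. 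Thus ${\bW^\dagger}_{2n}=\sigma.\bW_{2n}$ for the shift $\sigma$, and the same induction as in Theorem \ref{thm:wheel} (applied to the relabelled arrangement) shows that the defining equation of $V^\circ_{{\bW^\dagger}_{2n}}$ is the analogue of Equation \eqref{n-wheelEq} with every index shifted by one.

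The key step is then to check that this shifted equation is equivalent to Equation \eqref{n-wheelEq} itself. Writing out the shifted version, the defining polynomial of $V^\circ_{{\bW^\dagger}_{2n}}$ is
\begin{equation*}
\prod_{i=1}^n\Delta_{2i+1,2i}-\prod_{i=1}^n\Delta_{2i+1,2i+2}=0,
\end{equation*}
where indices are read modulo $2n$. Since $\Delta_{ab}=-\Delta_{ba}$, we have $\prod_{i=1}^n\Delta_{2i+1,2i}=(-1)^n\prod_{i=1}^n\Delta_{2i,2i+1}$ and $\prod_{i=1}^n\Delta_{2i+1,2i+2}=(-1)^n\prod_{i=1}^n\Delta_{2i+2,2i+1}=(-1)^n\prod_{i=1}^n\Delta_{2i,2i-1}$, the last equality coming from the reindexing $i\mapsto i-1$ in the cyclic product. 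Hence the shifted polynomial equals $(-1)^{n+1}$ times the left-hand side of Equation \eqref{n-wheelEq}, so the two vanishing loci coincide on $\gArr(2n,2)$, giving $V^\circ_{\bW_{2n}}=V^\circ_{{\bW^\dagger}_{2n}}$.

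To upgrade this from the generic locus $\gArr(2n,2)$ to the full $V_{\bW_{2n}}=V_{{\bW^\dagger}_{2n}}$ in $\Arr(2n,2)$, I would note that both singularity varieties are the Zariski closures of their generic parts (equivalently, both are cut out on $\Arr(2n,2)$ by the vanishing of the relevant minors of $(\alpha_C)$, and the explicit polynomial identity above holds as an identity of Plücker coordinates, not just on the generic open set). The main obstacle I anticipate is purely bookkeeping: keeping the cyclic index shifts consistent and correctly tracking the signs $(-1)^n$ produced by the antisymmetry $\Delta_{ab}=-\Delta_{ba}$ and by the reindexing of the cyclic products, so that the two equations are seen to differ only by an overall nonzero scalar and therefore define the same variety.
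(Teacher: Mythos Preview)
Your proposal is correct and is exactly the argument the paper has in mind: the corollary is stated without proof, as an immediate consequence of Theorem~\ref{thm:wheel}, and the only thing to observe is that the cyclic shift $i\mapsto i+1$ carries $\bW_{2n}$ to ${\bW^\dagger}_{2n}$ and transforms Equation~\eqref{n-wheelEq} into $(-1)^{n+1}$ times itself, so the two vanishing loci coincide. Your bookkeeping of the signs and the cyclic reindexing is accurate.
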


\begin{defi}
We call a family of subsets of $\bT$ a \textit{degenerated wheel} (resp. a \textit{degenerated wheel minus a hub}) if $\bT$ is formed by
$\{\{i_{l},i_{l+1},j_l\}\mid l=1,\dots,n \}\cup\{\{j_1,\ldots,j_n\}\}$ (resp. $\{\{i_{l},i_{l+1},j_l\}\mid l=1,\dots,n \}$) with the conditions;
\begin{itemize}
\item For each $l\in[n]$, $i_l$ does not belong to any element of $\bT$ without $\{i_{l-1},i_{l},j_{l-1}\},\{i_{l},i_{l+1},j_l\}$ (allow $j_l=j_{l'}$ for some $l,l'\in[n]$), and
\item $i_l= i_{l'}$ if and only if $l-l'\in n\bZ$.
\end{itemize}
\end{defi}

\begin{prop}\label{prop:DegenWheel}
Let $\cA$ be a generic line arrangement, and let $X$ be a minimal non-very generic intersection in $\cL(\cB(\cA))$.
If $\bT(X)$ satisfies the following conditions:
\begin{itemize}
\item there exists a subcollection of $\bT(X)$ formed by
$\{\{i_{l},i_{l+1},j_l\}\mid l=1,\dots,n \}$ which is a degenerated wheel minus a hub, and
\item $i_l\in S\in \bT(X)$ implies $S\in \{\{i_{l-1},i_{l},j_{l-1}\},\{i_{l},i_{l+1},j_l\}\}$,
\end{itemize}
then the normal vectors of $\cA$ satisfy the following equation:
\begin{equation*}
\prod_{l=1}^n\Delta_{j_l i_l}-\prod_{l=1}^n{\Delta_{j_l i_{l+1}}}=0.
\end{equation*}
\end{prop}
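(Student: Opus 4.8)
The plan is to argue directly with the linear dependence forced by non-very genericity, reading off the coordinates attached to the isolated rim indices $i_l$. Write $S_l=\{i_l,i_{l+1},j_l\}$ for the $n$ triples of the degenerated wheel minus a hub, and recall that in the dual basis $\{e_i^*\}$ of $\bS(\cA)^*$ one has $\alpha_{S_l}=\Delta_{i_{l+1}j_l}e_{i_l}^*-\Delta_{i_lj_l}e_{i_{l+1}}^*+\Delta_{i_li_{l+1}}e_{j_l}^*$. The whole point of the two hypotheses is that, among the $\alpha_C$ attached to $3$-subsets $C$ of the components of $\bT(X)$, the coordinate $e_{i_l}^*$ occurs only in $\alpha_{S_{l-1}}$ and $\alpha_{S_l}$: the isolation condition forbids $i_l$ from lying in any component other than $S_{l-1},S_l$, and one checks that $j_{l'}\neq i_l$ for every $l'$, since otherwise $S_{l-1}$ or $S_l$ would fail to be a genuine triple. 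Throughout, genericity of $\cA$ gives $\Delta_{xy}\neq0$ for all $x\neq y$.

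First I would show that the non-wheel part $\mathcal{R}=\bT(X)\setminus\{S_1,\dots,S_n\}$ is transverse. Since $X$ is non-very generic, $\rank X=\dim\langle\alpha_C\rangle<\nu(\bT(X))$, so there is a nontrivial relation among the $\alpha_C$. If the normals coming from $\mathcal{R}$ were already dependent, then $X_{\mathcal{R}}=\bigcap_{S\in\mathcal{R}}D_S$ would itself be non-very generic; but each $\alpha_{S_l}$ has nonzero $e_{i_l}^*$-coordinate while every $\mathcal{R}$-normal vanishes on $e_{i_l}^*$, so $\alpha_{S_l}\notin\langle\alpha_C:C\subset S\in\mathcal{R}\rangle$ and hence $X\subsetneq X_{\mathcal{R}}$. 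This contradicts the minimality of $X$. Therefore the $\mathcal{R}$-normals are independent and contribute their full rank $\nu(\mathcal{R})$.

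Next, since $\nu(\bT(X))=\nu(\mathcal{R})+n$ while $\rank X<\nu(\bT(X))$ and the $\mathcal{R}$-part already has rank $\nu(\mathcal{R})$, the triples $\alpha_{S_1},\dots,\alpha_{S_n}$ must be linearly dependent modulo $\langle\alpha_C:C\subset S\in\mathcal{R}\rangle$: there exist scalars $c_1,\dots,c_n$, not all zero, with $\sum_l c_l\alpha_{S_l}\in\langle\alpha_C:C\subset S\in\mathcal{R}\rangle$. Restricting this relation to the rim coordinates $e_{i_1}^*,\dots,e_{i_n}^*$ annihilates the right-hand side, and comparing the coefficient of each $e_{i_l}^*$ yields the cyclic system $c_l\Delta_{i_{l+1}j_l}=c_{l-1}\Delta_{i_{l-1}j_{l-1}}$ for $l\in[n]$. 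Equivalently $c^\top M=0$, where $M$ is the $n\times n$ cyclic bidiagonal matrix with $M_{l,l}=\Delta_{i_{l+1}j_l}$ and $M_{l,l+1}=-\Delta_{i_lj_l}$.

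Finally I would compute the determinant of a cyclic bidiagonal matrix, $\det M=\prod_l\Delta_{i_{l+1}j_l}-\prod_l\Delta_{i_lj_l}$. As $c\neq0$ we get $\det M=0$, and rewriting $\Delta_{i_{l+1}j_l}=-\Delta_{j_li_{l+1}}$ and $\Delta_{i_lj_l}=-\Delta_{j_li_l}$ cancels the sign $(-1)^n$ to give exactly $\prod_l\Delta_{j_li_l}-\prod_l\Delta_{j_li_{l+1}}=0$. Note that genericity is precisely what makes the alternative $c=0$ propagate around the cycle, and that the coincidences $j_l=j_{l'}$ allowed in a degenerated wheel never enter, since only the distinct rim coordinates $e_{i_l}^*$ are used. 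I expect the main obstacle to be the first step: justifying rigorously, through minimality, that the rank drop is not confined to $\mathcal{R}$ (so that $c\neq0$), which requires controlling the canonical presentation of $X_{\mathcal{R}}$ and confirming $X\subsetneq X_{\mathcal{R}}$; the remaining computation of $\det M$ is routine.
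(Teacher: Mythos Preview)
Your argument is correct and follows essentially the same route as the paper's proof. Both proofs use minimality of $X$ to guarantee that the linear dependence among the spanning vectors $\{\alpha_{S,\iota}\}$ of $X^\perp$ involves the wheel triples $\alpha_{S_l}$ with nonzero coefficients, then read off the coefficient of $e_{i_l}^\ast$ (which by the isolation hypothesis only sees $S_{l-1}$ and $S_l$) to obtain the cyclic recursion among the $c_{S_l}$, and finally telescope.

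The only organizational difference is in how minimality is invoked. The paper asserts directly that the full collection $\{\alpha_{S,\iota}\}$ is \emph{minimally} dependent (a circuit), so every $c_{S,\iota}\neq 0$; from there the $e_{i_l}^\ast$-equations telescope immediately to the product identity. You instead argue that the non-wheel block $\mathcal{R}$ is independent (else $X_{\mathcal{R}}\supsetneq X$ would already be non-very generic), hence some $c_l\neq 0$, and then let the cyclic relations and genericity propagate $c_l\neq 0$ around the rim before taking $\det M$. Your determinant packaging of the telescoping product is a clean equivalent of the paper's ratio manipulation. You are right that the delicate point in either version is the appeal to minimality---the paper's ``minimally dependent because $X$ is minimal non-very generic'' is just as terse as your step~1, and your explicit check that $\alpha_{S_l}\notin X_{\mathcal{R}}^\perp$ via the $e_{i_l}^\ast$-coordinate (so $X\subsetneq X_{\mathcal{R}}$) is exactly what is needed to make that appeal rigorous.
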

\begin{proof}
Without loss of generality, we assume $i_1<\dots<i_n<j_l$ for $l\in[n]$.
For each component $D_S$ of $X$ and $\iota\in\{3,\dots,\abs{S}\}$, $\alpha_{S,\iota}$ denotes $\alpha_{p_1p_2p_\iota}$, where $S=\{p_{1}<\dots<p_{\abs{S}}\}$.
The orthogonal space $D^\perp_S\subset \bS(\cA)^\ast$ of $D_S$ is spanned by $\{\alpha_{S,3},\dots,\alpha_{S,\abs{S}}\}$.
Hence the orthogonal space of $X$ is spanned by $\bigcup_{S\in\bT(X)}\{\alpha_{S,3},\dots,\alpha_{S,\abs{S}}\}$ whose cardinality is $\nu(\bT(X))=\sum_{S\in\bT(X)}\rank D_S$.
Since $X$ is non-very generic, $\dim D^\perp_S=\rank X<\nu(\bT(X))$.
Thus $\bigcup_{S\in\bT(X)}\{\alpha_{S,3},\dots,\alpha_{S,\abs{S}}\}$ is linearly dependent.
In particular, it is minimally dependent because $X$ is minimal non-very generic.
In other words, there exists non-zero scalars $c_{S,\iota}$ for $S\in\bT(X),\iota\in\{3,\dots,\abs{S}\}$ 
such that $\sum_{S\in\bT(X)}\sum_{\iota=3}^\abs{S}c_{S,\iota}\alpha_{S,\iota}=0$.
Fix such coefficients $c_{S,\iota}$.
The above equation also holds for each coefficient of $e^\ast_i$, i.e., 
\begin{equation*}
\sum_{S\in\bT(X)}\sum_{\iota=3}^\abs{S}c_{S,\iota}a_{S,\iota,i}e^\ast_i=0,
\end{equation*}
where $\alpha_{S,\iota}=\sum_{i=1}^{\abs{\cA}} a_{S,\iota,i}e^\ast_i$.

When $\abs{S}=3$, we write $c_S$ instead of $c_{S,3}$.
The normal vector of $D_{i_1i_{n}j_n}$ is $\alpha_{i_1i_nj_n}=\Delta_{i_nj_n}e^\ast_{i_1}-\Delta_{i_1j_n}e^\ast_{i_n}+\Delta_{i_1i_n}e^\ast_{j_n}$ and the normal vector of $D_{i_li_{l+1}j_l}$ is $\alpha_{i_li_{l+1}j_l}=\Delta_{i_{l+1}j_l}e^\ast_{i_l}-\Delta_{i_l j_l}e^\ast_{i_{l+1}}+\Delta_{i_li_{l+1}}e^\ast_{j_l}$ for $l=2,\dots,n$.
by assumption, $a_{S,\iota,i_l}\neq 0$ if and only if $S=\{i_{l-1},i_{l},j_{l-1}\},\{i_{l},i_{l+1},j_l\}$ for $l=1,\cdots, n$.
Hence, we get
\begin{align*}
c_{i_{1}i_{2}j_{1}}\Delta_{i_{2}j_{1}}+c_{i_{1}i_{n}j_{n}}\Delta_{i_{n}j_n}&=0,\\
-c_{i_{l-1}i_{l}j_{l-1}}\Delta_{i_{l-1}j_{l-1}}+c_{i_{l}i_{l+1}j_{l}}\Delta_{i_{l+1}j_l}&=0\mbox{ for }l=2,\dots,n-1,\\
-c_{i_{n-1}i_{n}j_{n-1}}\Delta_{i_{n-1}j_{n-1}}-c_{i_{1}i_{n}j_{n}}\Delta_{i_{1}j_n}&=0.
\end{align*}
Combining the above equations, we have
\begin{equation*}
1=\frac{c_{i_{1}i_{2}j_{1}}}{c_{i_{1}i_{n}j_{n}}}\left(\prod_{l=2}^{n-1}\frac{c_{i_{l}i_{l+1}j_{l}}}{c_{i_{l-1}i_{l}j_{l-1}}}\right)\frac{c_{i_{1}i_{n}j_{n}}}{c_{i_{n-1}i_{n}j_{n-1}}}=\frac{\Delta_{i_{n}j_n}}{\Delta_{i_{2}j_{1}}}\left(\prod_{l=2}^{n-1}\frac{\Delta_{i_{l+1}j_l}}{\Delta_{i_{l-1}j_{l-1}}}\right)\frac{\Delta_{i_{n-1}j_{n-1}}}{\Delta_{i_{1}j_n}}=\prod_{i=1}^n\frac{\Delta_{j_l i_l}}{\Delta_{j_l i_{l+1}}}.\qedhere
\end{equation*}
\end{proof}

%%%%%%%%%%%%%%%%%%%%%%%%%%%%%%%%%%%%%%%%%%%%%%%%%%%%%%%

\section{Deletion, contraction, degeneration}\label{Sec:Degen}
Recall the definition of the discriminantal arrangement in the general case, as given in \cite{Cr84, Cr85, BB97, OxWa19,CFW21,FuWang23}.
Let $\cA=\{H_i=\Ker\alpha_i\mid i=1,\dots,n\}$ be a multiarrangement with normal vectors $\{\alpha_1, \ldots, \alpha_n\}$, allowing the $\alpha_i$ to be parallel.
(Multiarrangements are usually defined as a pair of an arrangement and a weight function, but here, they are defined as a multiset indexed by $[n]$. Thus, we allow $\alpha_i=\alpha_j$ for some $i,j\in[n]$.)
As in the case of a generic arrangement, we define $\bS(\cA)=\bigoplus_{i\in[n]}W/H_i$ and $D_S = \{\cA^t \in \bS(\cA) \mid \bigcap_{i\in S} H^t_i \neq \emptyset\}$.
Let $f_S: V \to \bS(\cA)$ be defined by $f_S(v)=\sum_{i\in S} H_i^{\alpha_i(v)}$ for each $S \subset [n]$.
Then, we obtain $D_S = \Im f_S \bigoplus \bS(\cA\setminus S)$.
Thus, $\codim D_S = \abs{S} -\dim \Im((\alpha_i)_{i\in S}: W \to \bK^S) = \abs{S} - \rank(\bigcap_{i\in S} H_i)$.
Hence, we have that $D_S\neq \bS(\cA)$ if and only if $\{\alpha_i\mid i\in S\}$ is linearly dependent, and $D_S$ is a hyperplane if $ \rank(\bigcap_{i\in S} H_i)=\abs{S}-1$.
A subset $C$ of $[n]$ is a \textit{circuit} if $\{\alpha_i \mid i \in C\}$ is a minimal linearly dependent set respect to inclusion.
We denote by $\cC=\cC(\cA)$ denotes the set of circuits in $\cA$.
Then, $D_C$ is a hyperplane in $\bS(\cA)$ for a circuit $C \in \cC$, and $D_C \neq D_{C'}$ whenever $C'$ in $\cC$ is distinct from $C$.
The discriminantal arrangement associated to $\cA$ is a hyperplane arrangement consisting of all $D_C$ for all circuits $C\in \cC$.
If the cardinality of all circuits is $k+1$, $\cA$ is a generic arrangement.
Thus, this is a generalization of what was defined in Section \ref{Sec:DefOfDisArr}.
Since we are interested in generic arrangements, we will focus on the case where the cardinality of each circuit is either $2$ or $k+1$.
In other words, they are generic arrangements with some additional parallel hyperplanes.
If $C=\{i,j\}$, then $\alpha_C=c_ie^\ast_i-c_je^\ast_j$, where $c_i,c_j$ satisfy $c_i\alpha_i=c_j\alpha_j$.
Here $e_i$'s, in which each $e_i$ spans $W/H_i$ for each $i\in[n]$, are the canonical basis of $\bS(\cA)$, and $e^\ast_i$ are the dual basis.
\begin{exam}
	Let the coefficient field be $\bR$. Take the vectors
	\begin{equation*}
	(\alpha_1\alpha_2\alpha_3\alpha_4)=\begin{pmatrix}1&2&0&1\\0&0&1&1\end{pmatrix}.
	\end{equation*}
	Then, the circuits of the arrangement $\cA$ whose normal vectors are $\alpha_1,\alpha_2,\alpha_3$, and $\alpha_4$ are $\cC(\cA)=\{12,134,234\}$.
	Hence, $\cB(\cA)$ consists of three hyperplanes in a $4$-dimensional space $\bS(\cA)$, and the normal vectors are
	\begin{equation*}
		\alpha_{12}=-2e^\ast_1+e^\ast_2,\quad
		\alpha_{134}=-e^\ast_1-e^\ast_3+e^\ast_4,\quad
		\alpha_{234}=-e^\ast_2-2e^\ast_3+2e^\ast_4.
	\end{equation*}
\end{exam}

As is well known, deletion and restriction are fundamental operations in the study of arrangements.
For a (multi)arrangement $\cA$ in $W$ and $H_i\in\cA$, the deletion is defined as $\cA\setminus H_i=\cA\setminus\{H_i\}$ and the restriction as $\cA^{H_i}=\{H_i\cap H\mid H\in\cA\setminus H_i\}$.
Then, $\cC(\cA\setminus H_i)$ consists of all circuits of $\cA$ not containing $i$, i.e., $\{C\mid i\notin C\in \cC(\cA)\}$.
Furthermore, $\cC(\cA^{H_i})$ consists of all the minimal elements of $\{C\setminus i\mid i\in C\in \cC(\cA)\}$.
The discriminantal arrangement associated to restriction $\cA^{H_i}$ was discussed in \cite{LiSe18}, which we will revisit here.

\begin{prop}\label{prop:DelCont}
Let $\cA$ be an arrangement and let $H_i\in \cA$.
\begin{enumerate}[i)]
\item The discriminantal arrangement $\cB(\cA\setminus H_i)$ associated to $\cA\setminus H_i$ is linearly equivalent to $\{D_C\cap\bS(\cA\setminus H_i) \mid i\notin C\in \cC(\cA)\}.$
In particular, if $H_i=H_j$ for some $j\neq i$, then $\cB(\cA\setminus H_i)$ is linearly equivalent to $\cB(\cA)^{D_{ij}}$.
\item The discriminantal arrangement $\cB(\cA^{H_i})$ associated to $\cA^{H_i}$ is linearly equivalent to 
\begin{align*}
&\{D_C\cap\bS(\cA\setminus H_i) \mid i\in C\in \cC(\cA)\}\\
&\cup\{D_{C\cup\{i\}}\cap\bS(\cA\setminus H_i) \mid i\notin C\in \cC(\cA), C\cup\{i\}\mbox{ contains no circuits of }\cA\mbox{ except }C\}.
\end{align*}
In particular, if $\cA$ is a generic arrangement, then $\cB(\cA^{H_i})\cong\{D_C\cap\bS(\cA\setminus H_i) \mid i\in C\in \cC(\cA)\}$.
\end{enumerate}
\end{prop}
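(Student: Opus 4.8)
The plan is to unwind the definition of the discriminantal arrangement in terms of the maps $f_S$ and the decomposition $D_S = \Im f_S \oplus \bS(\cA\setminus S)$ established just before the statement, and then to track carefully how circuits of $\cA$ transform under deletion and restriction. For part i), the key observation is already recorded in the text: $\cC(\cA\setminus H_i)$ is exactly the set of circuits of $\cA$ not containing $i$. So the hyperplanes of $\cB(\cA\setminus H_i)$ are the $D_C$ for $i\notin C\in\cC(\cA)$, but computed inside the smaller space $\bS(\cA\setminus H_i)=\bigoplus_{j\neq i}W/H_j$. I would argue that for a circuit $C$ with $i\notin C$, the normal vector $\alpha_C$ involves only the dual coordinates $e^\ast_j$ with $j\in C$, hence $j\neq i$, so that intersecting $D_C\subset\bS(\cA)$ with the coordinate subspace $\bS(\cA\setminus H_i)$ gives precisely the hyperplane cut out by the same $\alpha_C$ in the smaller space. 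This yields the claimed linear equivalence $\cB(\cA\setminus H_i)\cong\{D_C\cap\bS(\cA\setminus H_i)\mid i\notin C\in\cC(\cA)\}$. For the ``In particular'' clause, when $H_i=H_j$ the circuit $\{i,j\}$ becomes a circuit of $\cA$, its associated hyperplane is $D_{ij}$, and restricting $\cB(\cA)$ to $D_{ij}$ amounts to setting the two parallel coordinates equal; I would check that under the natural identification $\bS(\cA\setminus H_i)\cong D_{ij}$ the hyperplanes $D_C\cap D_{ij}$ for $i\notin C$ match the deletion hyperplanes, and that the circuits containing $i$ contribute nothing new after restriction.

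For part ii), the combinatorial input is the description $\cC(\cA^{H_i})=\{\text{minimal elements of }\{C\setminus i\mid i\in C\in\cC(\cA)\}\}$. I would split the circuits of the restriction into two types according to how they arise. A circuit $C'\in\cC(\cA^{H_i})$ either equals $C\setminus\{i\}$ for some circuit $C\ni i$ of $\cA$ (these produce the first family $\{D_C\cap\bS(\cA\setminus H_i)\mid i\in C\in\cC(\cA)\}$), or it is a circuit $C$ of $\cA$ with $i\notin C$ whose image hyperplanes in the restriction correspond to the enlarged set $C\cup\{i\}$ (the second family, with the minimality condition that $C\cup\{i\}$ contains no circuit of $\cA$ other than $C$). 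The heart of the argument is to verify that passing to the restriction $\cA^{H_i}$ — which dualizes to intersecting each normal vector with the hyperplane $H_i$, i.e.\ projecting $W$ onto $W/H_i$ or equivalently along $\alpha_i$ — sends the circuit $C$ to the dependency $C\setminus\{i\}$ or $C\cup\{i\}$ in the way claimed, and that the associated discriminantal hyperplanes transform by the coordinate intersection with $\bS(\cA\setminus H_i)$. The generic special case then follows immediately because for a generic $\cA$ every circuit has exactly $k+1$ elements, so deleting $i$ from a circuit through $i$ yields a dependency of size $k$ in the rank $k-1$ restriction, while the second family is empty (any $C\cup\{i\}$ with $\abs{C}=k+1$ would be dependent of the wrong size).

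I expect the main obstacle to be the bookkeeping in part ii): correctly matching the two families of circuits of $\cA^{H_i}$ with circuits of $\cA$, and verifying the minimality condition ``$C\cup\{i\}$ contains no circuits of $\cA$ except $C$'' is exactly what guarantees $C\cup\{i\}$ maps to a genuine (minimal) circuit of the restriction rather than a dependent but non-minimal set. The subtlety is that the restriction operation on matroids/arrangements is contraction on the dual side, and a single circuit of $\cA$ can, under contraction by $i$, either shrink (when $i\in C$) or survive while acquiring $i$ as a ``phantom'' element; I would make this precise by computing, for each type, the normal covector $\alpha_{C'}$ of the restriction circuit directly from the $\alpha_j$ projected modulo $\alpha_i$, and comparing it with the intersection $D_C\cap\bS(\cA\setminus H_i)$ or $D_{C\cup\{i\}}\cap\bS(\cA\setminus H_i)$. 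The linear-equivalence claim should then reduce to the observation that these covectors agree up to the coordinate identification $\bS(\cA^{H_i})\cong\bS(\cA\setminus H_i)$, and the transversality/non-degeneracy needed to conclude ``linearly equivalent'' rather than merely ``equal as sets'' follows because distinct circuits give distinct, correctly-scaled normal vectors.
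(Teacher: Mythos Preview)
Your proposal is correct and follows essentially the same approach as the paper. The only stylistic difference is that the paper argues geometrically via the defining condition $\bigcap_{j\in S}H_j^t\neq\emptyset$ (so that, e.g., $D_C(\cA^{H_i})\cong D_{C\cup\{i\}}(\cA)\cap\bS(\cA\setminus H_i)$ follows immediately from $H_i\cap\bigcap_{j\in C}H_j^t=\bigcap_{j\in C\cup\{i\}}H_j^t$ with $t_i=0$), whereas you phrase the same computations in terms of the normal covectors $\alpha_C$; both viewpoints are equivalent and your bookkeeping for the two families of restriction circuits and for the generic special case is accurate.
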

\begin{proof}
\begin{enumerate}[i)]
\item The first claim follows from $\cC(\cA\setminus H_i)=\{C\in \cC(\cA)\mid i\notin C\}$.
If $\cA$ has $j\in [n]$ such that $H_i=H_j$, then $D_{ij}=\{\cA^t\mid H^t_i=H^t_j\}$.
Hence the isomorphism between $D_{ij}$ and $\bS(\cA\setminus{H_i})$ given by $\cA^t\mapsto \cA^t\setminus H_i^t$ provides the linear equivalence $\cB(\cA)^{D_{ij}}\cong \cB(\cA\setminus H_i)$.

\item Fix a natural isomorphism between $\bS(\cA^{H_i})=\bigoplus_{H\in \cA\setminus H_i}H_i/(H_i\cap H) $ and $\bS(\cA\setminus{H_i})=\bigoplus_{H\in \cA\setminus H_i}W/H$.
Let $(\cA\setminus H_i)^t\in D_S(\cA\setminus H_i)$; then $\bigcap_{i\in S}(H_i^t\cap H_i)=H_i\cap \bigcap_{i\in S}H_i^t\neq \emptyset.$
Thus if $i\in C\in \cC(\cA)$, then $C\setminus i$ is a circuit of $\cA^{H_i}$ and $D_{C\setminus i}(\cA^{ H_i})= D_C(\cA)\cap\bS(\cA\setminus H_i)$.
Assume $i\notin C\in \cC(\cA)$: then $D_{C}(\cA^{ H_i})\cong D_{C\cup \{i\}}(\cA)\cap\bS(\cA\setminus H_i)$.
If $C\cup\{i\}$ contains circuits $C'$ of $\cA$ other than $C$, then $D_{C\cup \{i\}}\subsetneq D_{C'}$.
In this case, $D_{C\cup \{i\}}$ is not a hyperplane in $\bS(\cA)$.
Conversely, if $C\cup\{i\}$ contains no circuits except $C$, then $D_{C\cup\{i\}}$ is a hyperplane in $\bS(\cA)$.
\end{enumerate}
\end{proof}

Therefore, the very generic arrangements are closed under the operations deletion and restriction.
\begin{coro}
    Let $\cA$ be a generic arrangement.
    If $\cA\setminus H$ or $\cA^H$ is a non-very generic arrangement for some hyperplane $H\in\cA$, then $\cA$ is also non-very generic.
\end{coro}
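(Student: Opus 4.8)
The plan is to prove the contrapositive of the corollary by leveraging Proposition~\ref{prop:DelCont}. The statement to establish is that if $\cA$ is a generic arrangement and $\cA\setminus H$ (or $\cA^H$) is non-very generic, then $\cA$ itself is non-very generic. Equivalently, we want: \emph{very generic is preserved by deletion and restriction.} So I would begin by assuming $\cA$ is generic and very generic, i.e.\ $\cL(\cB(\cA))\cong P(n,k)$, and aim to show that both $\cA\setminus H$ and $\cA^H$ are very generic. Fix the hyperplane $H=H_i$.

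First I would treat the deletion case. Part (i) of Proposition~\ref{prop:DelCont} identifies $\cB(\cA\setminus H_i)$ (up to linear equivalence) with $\{D_C\cap\bS(\cA\setminus H_i)\mid i\notin C\in\cC(\cA)\}$, which is precisely the subarrangement of $\cB(\cA)$ obtained by deleting all hyperplanes $D_C$ with $i\in C$ and then restricting the ambient space to $\bS(\cA\setminus H_i)$. The key observation is combinatorial: deleting the index $i$ from $[n]$ induces an order-preserving map $Q(n,k)\to Q([n]\setminus i,k)$, and this map carries $P(n,k)$ into $P(n-1,k)$ compatibly with the grading $\nu$ — that is, restricting a family $\bT$ to the subsets avoiding $i$ preserves both conditions (Q1) and (P). Since $\cA$ is very generic, every intersection $X\in\cL(\cB(\cA))$ satisfies $\rank X=\nu(\bT(X))$ with $\bT(X)\in P(n,k)$; restricting to the hyperplanes not containing $i$ yields intersections of $\cB(\cA\setminus H_i)$ whose canonical presentations lie in $P(n-1,k)$ with matching rank, so no new degeneracy can appear. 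Hence $\cA\setminus H_i$ is very generic.

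For the restriction case I would argue similarly using part (ii), which (for generic $\cA$) gives $\cB(\cA^{H_i})\cong\{D_C\cap\bS(\cA\setminus H_i)\mid i\in C\in\cC(\cA)\}$. Here the relevant hyperplanes of $\cB(\cA)$ are those $D_C$ \emph{containing} the index $i$, and the linear identification $D_{C\setminus i}(\cA^{H_i})=D_C(\cA)\cap\bS(\cA\setminus H_i)$ from the proof of Proposition~\ref{prop:DelCont} lets me transport the lattice structure directly. The combinatorial bookkeeping is the contraction analogue: circuits of $\cA^{H_i}$ are the sets $C\setminus i$ for $i\in C\in\cC(\cA)$, and one checks that a very generic intersection of $\cB(\cA)$ restricts to a very generic intersection of $\cB(\cA^{H_i})$. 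Because $\cB(\cA^{H_i})$ is (up to equivalence) an induced subarrangement sitting inside the very generic $\cB(\cA)$, its intersection lattice embeds order-and-rank-preservingly, so again no non-very-generic intersection can be created.

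The main obstacle I anticipate is not the deletion direction, which is essentially a sublattice argument, but verifying in the restriction case that the map on canonical presentations genuinely lands in $P$ of the correct parameters and respects $\nu$ — one must confirm that neither the transversality condition ($\rank X=\sum_{S\in\bT(X)}\rank D_S$) nor the BBA-condition can fail after contraction when it held before, and that the rank equality $\rank X=\nu(\bT(X))$ is inherited through the identification $D_{C\setminus i}(\cA^{H_i})=D_C(\cA)\cap\bS(\cA\setminus H_i)$. Once that compatibility is in place, the corollary follows immediately by contraposition: a non-very generic deletion or restriction forces some intersection failing the very generic condition, which by the embeddings lifts to a corresponding failure in $\cL(\cB(\cA))$, contradicting that $\cA$ is very generic.
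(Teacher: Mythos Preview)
Your approach is essentially the paper's: the corollary is stated there without proof, as an immediate consequence of Proposition~\ref{prop:DelCont}, and your contrapositive argument via the linear equivalences of that proposition is exactly the intended route. The deletion case is clean once you note that $Q([n]\setminus i,k)$ embeds in $Q(n,k)$ (rather than the other direction you wrote), preserving both the BBA-condition and $\nu$, while the normals $\alpha_C$ with $i\notin C$ have zero $e_i^\ast$-coefficient so ranks are unchanged.

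The obstacle you flag in the restriction case dissolves with one observation you did not quite reach: for \emph{any} $\bT'$ whose members all contain $i$, the intersection $X'=\bigcap_{S'\in\bT'}D_{S'}(\cA)$ is never contained in $\bS(\cA\setminus H_i)=\{t_i=0\}$, because the diagonal $W\hookrightarrow\bS(\cA)$, $v\mapsto(\alpha_j(v))_j$, lands in every $D_S$ and hits $t_i\neq 0$ whenever $\alpha_i(v)\neq 0$. Hence $\codim_{\bS(\cA\setminus H_i)}(X'\cap\bS(\cA\setminus H_i))=\codim_{\bS(\cA)}X'$ unconditionally. Coupling this with the bijection $\bT\leftrightarrow\bT'=\{S\cup\{i\}:S\in\bT\}$ between $P(n-1,k-1)$ and its image in $P(n,k)$, which preserves $\nu$ since $\lvert S\cup\{i\}\rvert-k=\lvert S\rvert-(k-1)$ and transports condition~(P) verbatim, gives the rank equality $\rank_{\cB(\cA^{H_i})}\bigcap_{S\in\bT}D_S(\cA^{H_i})=\nu_{k-1}(\bT)$ for every $\bT\in P(n-1,k-1)$, which is what you needed.
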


In particular, we call the operation of deleting a parallel element a \textit{degeneration}.
\begin{defi}
Let $i\in[n-1]$, and let $\bT\in Q(n,k)$.
Define 
\begin{equation*}
\tilde{\bT}_{n\to i}=\{S\cup\{i\}\setminus\{n\}\mid n\in S\in\bT\}\cup\{S\mid n\notin S\in\bT\}\setminus\binom{[n]}{k}
\end{equation*}
and $\bT_{n\to i}$ to be a minimal element of $Q(n-1,k)$ equal to, or larger than, $\tilde{\bT}_{n\to i}$.
The family $\bT_{n\to i}$ of subsets of $[n-1]$ is said to be a \textit{degeneration} of $\bT$ from $n$ to $i$.
We write $\gamma=\gamma(\bT;n,i)$ as the cardinality of ${\{S\in \bT\mid \{n,i\}\subset S\}}$.
\end{defi}

\begin{exam}
	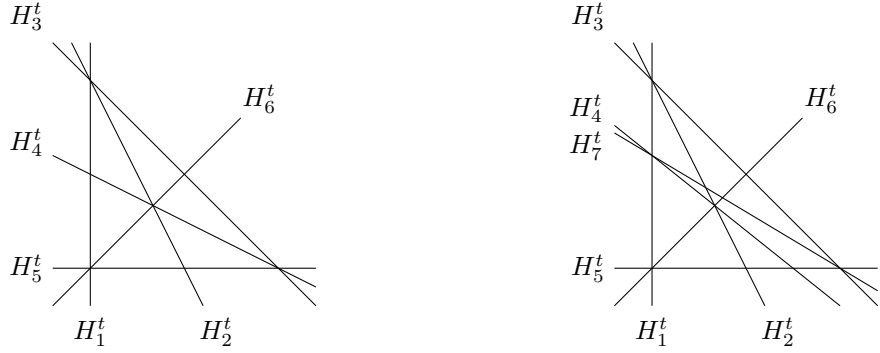
\begin{figure}[h]
	\centering
	\begin{tabular}{cc}
	\begin{minipage}{0.4\linewidth}
	\centering
	\begin{tikzpicture}[scale=2.5]
		\draw (0,-0.2)--(0,1.2) node[pos=-0.1]{$H^t_{1}$};
		\draw (0.6,-0.2)--(-0.1,1.2) node[pos=-0.1]{$H^t_{2}$};
		\draw (-0.2,1.2)--(1.2,-0.2) node[pos=-0.1]{$H^t_{3}$};
		\draw (-0.2,0.6)--(1.2,-0.1)node[pos=-0.1]{$H^t_{4}$};
		\draw (-0.2,0)--(1.2,0) node[pos=-0.1]{$H^t_{5}$};
		\draw (-0.2,-0.2)--(0.8,0.8) node[pos=1.1]{$H^t_{6}$};
	\end{tikzpicture}
	\end{minipage}&
	\begin{minipage}{0.4\linewidth}
	\centering
	\begin{tikzpicture}[scale=2.5]
		\draw (0,-0.2)--(0,1.2) node[pos=-0.1]{$H^t_{1}$};
		\draw (0.6,-0.2)--(-0.1,1.2) node[pos=-0.1]{$H^t_{2}$};
		\draw (-0.2,1.2)--(1.2,-0.2) node[pos=-0.1]{$H^t_{3}$};
		\draw (-0.2,0.76)node[at={(-0.35,0.85)}]{$H^t_{4}$}--(1,-0.2);
		\draw (-0.2,0)--(1.2,0) node[pos=-0.1]{$H^t_{5}$};
		\draw (-0.2,-0.2)--(0.8,0.8) node[pos=1.1]{$H^t_{6}$};
		\draw (-0.2,0.72)node[at={(-0.35,0.65)}]{$H^t_{7}$}--(1.2,-0.12);
	\end{tikzpicture}
	\end{minipage}
	\end{tabular}
		\caption{representatives of the minimal non-very generic intersection with $6$ or $7$ lines}\label{fig;degen} 
	\end{figure}

Let $\bT$ be $\{123,147,156,246,357\}$.
The degeneration of $\bT$ from $7$ to $4$ is $\bT_{7\to 4}=\{123,156,246,345\}$.
Refer to the two $(\bT,r )$-singularity varieties in Example \ref{Exam:NVGV}; these are defined by equation \eqref{Eq:quad} and \eqref{Eq:quint}, i.e.
\begin{align*}
V_\bT=&\{\cA\in\Arr(7,2)\mid \Delta_{14}\Delta_{16}\Delta_{27}\Delta_{35}-\Delta_{17}\Delta_{15}\Delta_{26}\Delta_{34}=0\}\\
V_{\bT_{7\to4}}=&\{\cA\in\Arr(6,2)\mid \Delta_{16}\Delta_{24}\Delta_{35}-\Delta_{15}\Delta_{26}\Delta_{34}=0\}
\end{align*}
 where $\alpha_i$ is a normal vector of $H_i$, and $\Delta_{ij}=\det(\alpha_i\alpha_j)$.
For real or complex coefficients, the degeneration of $\bT$ from $4$ to $7$ corresponds to continuously deforming the representatives and taking the limit $H^t_7\to H^t_4$ while maintaining the canonical presentation (see Figure \ref{fig;degen}).
Equation \eqref{Eq:quad} is obtained by substituting $\alpha_4$ for $\alpha_7$ in Equation \eqref{Eq:quint}.
It says that if $\cA=\{H_1,\dots, H_7\}\in V_{\bT}$ and $H_4=H_7$, then $\cA\setminus H_7\in V_{\bT_{7\to4}}$.
\end{exam}
Generalizing the above example, we obtain the following theorem.

\begin{thm}\label{thm:degen}
Let $\bT\in Q(n,k)$, and let $\bT_{n\to j}$ be the degeneration of $\bT$ from $n$ to $j$.
\begin{enumerate}
\item If $\abs{\bigcup_{S\in\bT_{n\to j}}S}=\abs{\bigcup_{S\in\bT}S}-1$, $\bT\notin P(n,k)$, $\gamma(\bT;n,j)=1$, and $\tilde\bT_{n\to j}\in Q(n-1,k)$, then we have $\bT_{n\to j}\notin P(n-1,k)$.
\item If $\gamma(\bT;n,j)>0$, $\cA=\{H_1,\dots,H_n\}\in V_{(\bT,r)}$, and $H_n=H_j$, then we have $\cA\setminus H_n\in V_{(\bT_{n\to j},r-1)}$.
\end{enumerate}
\end{thm}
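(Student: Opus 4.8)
The two parts are essentially independent: (2) is geometric and rests on identifying the degenerate locus $D_{nj}(\cA)$ with the translation space $\bS(\cA\setminus H_n)$ via Proposition \ref{prop:DelCont}(i), while (1) is a purely combinatorial statement about the BBA-condition that I would prove by transporting a violating subfamily along the degeneration map. Throughout I abbreviate $\mu(\bT')=\left(\abs{\bigcup_{S\in\bT'}S}-k\right)-\nu(\bT')$, so that condition (P) for $\bT'$ reads $\mu(\bT')>0$, and $\bT$ fails (P) exactly when some $\bT'\subseteq\bT$ with $\abs{\bT'}\geq 2$ has $\mu(\bT')\leq 0$.

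\textbf{Part (2).} Since $\gamma(\bT;n,j)>0$, pick $S_0\in\bT$ with $\{n,j\}\subseteq S_0$. As $H_n=H_j$, the translates $H_n^t$ and $H_j^t$ are parallel, so $\bigcap_{i\in S_0}H_i^t\neq\emptyset$ forces $H_n^t=H_j^t$; hence $X:=\bigcap_{S\in\bT}D_S(\cA)\subseteq D_{S_0}\subseteq D_{nj}$. I then use the linear isomorphism $D_{nj}\cong\bS(\cA\setminus H_n)$, $\cA^t\mapsto\cA^t\setminus H_n^t$, of Proposition \ref{prop:DelCont}(i), and track each component: for $n\notin S$ the slice $D_S(\cA)\cap D_{nj}$ becomes $D_S(\cA\setminus H_n)$, while for $n\in S$ it becomes $D_{(S\setminus\{n\})\cup\{j\}}(\cA\setminus H_n)$ (collapsing $H_n^t=H_j^t$). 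Discarding the resulting $k$-element index sets, whose $D$ is the whole space, the image $X'$ of $X$ equals $\bigcap_{S'\in\tilde\bT_{n\to j}}D_{S'}(\cA\setminus H_n)$. Because $D_{nj}$ is a hyperplane, $\rank X'=\rank X-1\leq r-1$. It remains to replace $\tilde\bT_{n\to j}$ by $\bT_{n\to j}$: the passage to the minimal element of $Q(n-1,k)$ above $\tilde\bT_{n\to j}$ only merges index sets overlapping in $\geq k$ places and drops sets contained in others, and for the generic arrangement $\cA\setminus H_n$ one has $D_{S_1}\cap D_{S_2}=D_{S_1\cup S_2}$ whenever $\abs{S_1\cap S_2}\geq k$ (the $k$ common hyperplanes meet in one point for every $t$). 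With Proposition \ref{prop:NVGV} supplying the reverse inclusion, $\bigcap_{S'\in\bT_{n\to j}}D_{S'}(\cA\setminus H_n)=X'$, so $\cA\setminus H_n\in V_{(\bT_{n\to j},r-1)}$.

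\textbf{Part (1).} Here $\tilde\bT_{n\to j}\in Q(n-1,k)$ forces $\bT_{n\to j}=\tilde\bT_{n\to j}$, and the map $\phi$ with $\phi(S)=S$ for $n\notin S$ and $\phi(S)=(S\setminus\{n\})\cup\{j\}$ for $n\in S$ is injective on $\bT$, since two sets with equal images would share at least $k$ indices, contradicting (Q2). The assumption $\gamma=1$ makes $S_0$ the unique member meeting both $n$ and $j$, and $\abs{\bigcup\bT_{n\to j}}=\abs{\bigcup\bT}-1$ ensures the support loses only $n$. As $\bT\in Q(n,k)\setminus P(n,k)$, it fails (P), so some $\bT'\subseteq\bT$ has $\mu(\bT')\leq 0$; and since $\mu(\{S_1,S_2\})=k-\abs{S_1\cap S_2}\geq 1$ by (Q2), every such violating $\bT'$ has $\abs{\bT'}\geq 3$. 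Put $\bT''=\{\phi(S):S\in\bT'\}\setminus\binom{[n-1]}{k}\subseteq\bT_{n\to j}$. A case analysis on whether $S_0\in\bT'$ and whether $\abs{S_0}=k+1$ (the only situation in which $\phi(S_0)$ is a discarded $k$-set, whence $\abs{\bT''}=\abs{\bT'}-1\geq 2$) shows directly that the drop in $\nu$ is matched or exceeded by the drop in the union size, giving $\mu(\bT'')\leq\mu(\bT')\leq 0$ with $\abs{\bT''}\geq 2$; this witnesses $\bT_{n\to j}\notin P(n-1,k)$.

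\textbf{Main obstacle.} In (2) the delicate point is justifying that the normalization $\tilde\bT_{n\to j}\rightsquigarrow\bT_{n\to j}$ leaves the intersection unchanged; this is exactly where genericity of $\cA\setminus H_n$ must be invoked through the collapse $D_{S_1}\cap D_{S_2}=D_{S_1\cup S_2}$ for index sets overlapping in at least $k$ places. In (1) the crux is controlling $\abs{\bigcup\bT''}$ when the unique double set $S_0$ lies in the violating family and has size $k+1$, so that its image is deleted: the (Q2)-forced bound $\abs{\bT'}\geq 3$ is what keeps $\abs{\bT''}\geq 2$, while $\gamma=1$ and $\abs{\bigcup\bT_{n\to j}}=\abs{\bigcup\bT}-1$ are what prevent the defect $\mu$ from moving in the wrong direction.
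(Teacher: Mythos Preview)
Your proof is correct and follows essentially the same approach as the paper's. For (2), both you and the paper use the identification $D_{nj}\cong\bS(\cA\setminus H_n)$ from Proposition~\ref{prop:DelCont}(i), observe that $\gamma>0$ forces $X\subseteq D_{nj}$, and read off the rank drop by one; you are simply more explicit about the passage from $\tilde\bT_{n\to j}$ to $\bT_{n\to j}$, which the paper compresses into the single displayed equality $D_{nj}\cap\bigcap_{S\in\bT}D_S=D_{nj}\cap\bigcap_{S\in\bT_{n\to j}}D_S$. For (1), both arguments transport a (P)-violating subfamily $\bT'\subseteq\bT$ along the degeneration and compare the change in $\nu$ against the change in $\abs{\bigcup\bT'}$ according to whether $\gamma(\bT';n,j)$ equals $0$ or $1$; your extra case split on $\abs{S_0}=k+1$ and the observation that (Q2) forces $\abs{\bT'}\geq 3$ just make explicit the verification $\abs{\bT''}\geq 2$, which the paper leaves implicit.
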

\begin{proof}
\begin{enumerate}
\item Since $\bT\notin P(n,k)$, there exists $\bT'\subset\bT$ such that $\nu(\bT')\geq \abs{\bigcup_{S\in\bT'}S}-k$.
Let us show that $\bT'_{n\to j}\notin P(n-1,k)$.
by assumption, $\tilde\bT_{n\to j}=\bT_{n\to j}$ for each $\bT'\subset \bT$.
We obtain $\nu(\bT'_{n\to j})=\nu(\tilde\bT'_{n\to j})=\nu(\bT')-\gamma(\bT';n,j)$.
Because $\gamma(\bT';n,j)\leq\gamma(\bT;n,j)$ for $\bT'\subset \bT$, we get $0\leq \gamma(\bT';n,j)\leq 1$.
If $\gamma(\bT';n,j)=1$, then $\abs{\bigcup_{S\in\bT'_{n\to j}}S}=\abs{\bigcup_{S\in\bT'}S\setminus \{n\}}=\abs{\bigcup_{S\in\bT'} S}-1$, hence we have $\nu(\bT'_{n\to j})\geq \abs{\bigcup_{S\in\bT'_{n\to j}}S}-k$.
If $\gamma(\bT';n,j)=0$, then we have $\nu(\bT'_{n\to j})=\nu(\bT')\geq \abs{\bigcup_{S\in\bT'}S}-k\geq \abs{\bigcup_{S\in\bT'_{n\to j}}S}-k$.

\item 
We identify $\cB(\cA\setminus H_n)$ with $\cB(\cA)^{D_{nj}}$ via Proposition \ref{prop:DelCont}.
by assumption, $D_{nj}\supset \bigcap_{S\in \bT}D_S$ and $\rank_{\cB(\cA)} \bigcap_{S\in \bT}D_S\leq r$.
Hence
\begin{equation*}
\bigcap_{S\in \bT}D_S=D_{nj}\cap\bigcap_{S\in \bT}D_S=D_{nj}\cap\bigcap_{S\in \bT_{n\to j}}D_S.
\end{equation*}
and 
\begin{align*}
\rank_{\cB(\cA)^{D_{nj}}} D_{nj}\cap\bigcap_{S\in \bT_{n\to j}}D_S=&\rank_{\cB(\cA)} D_{nj}\cap\bigcap_{S\in \bT_{n\to j}}D_S-1\\
=&\rank_{\cB(\cA)} \bigcap_{S\in \bT}D_S-1\leq r-1.\qedhere
\end{align*}
\end{enumerate}
\end{proof}

%%%%%%%%%%%%%%%%%%%%%%%%%%%%%%%%%%%%%%%%%%%%%%%%%%%%%%%

\section{Applications}\label{Sec:ExamNVVar}
In this section, we provide some applications of the degeneration construction and the wheel arrangements.
The $2n$-wheel is defined for a generic arrangement consisting of $2n$ hyperplanes.
However, it can be extended to a general arrangement by adding the following assumptions:  $H_{i}\neq H_{i+1}$ and $H_{i}\neq H_{i+2}$ for each $i\in[2n]$.
Furthermore, under this assumption, Theorem \ref{thm:wheel} still holds; that is, the $(\bT,r )$-singularity variety $V_{\bW_{2n}}$ is defined by Equation \eqref{n-wheelEq}.
We define $\bL_{2n+2}=\{\{2i-1,2i,2n+1\}\mid i\in[n] \}\cup\{\{2i,2i+1,2n+2\}\mid i\in[n-1]\}\cup\{\{1,2n,2n+2\}\}$; $\bL_{2n+2}$ is called a $2n+2$-\textit{ladder}.
Then, $\bL_{2n+2}\in Q(2n+2,2)\setminus P(2n+2,2)$.
\begin{prop}\label{2n+2ladder}
The defining equation of $V_{\bL_{2n+2}}$ is 
\begin{equation*}
\prod_{i=1}^n\Delta_{2i,2n+2}\Delta_{2i-1,2n+1}-\prod_{i=1}^n{\Delta_{2i,2n+1}\Delta_{2i-1,2n+2}}=0.
\end{equation*}
\end{prop}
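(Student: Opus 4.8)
The plan is to realize $\bL_{2n+2}$ as a degeneration of a wheel and then invoke Theorem~\ref{thm:wheel} together with Theorem~\ref{thm:degen}. First I would examine the structure of $\bL_{2n+2}$: it has two ``hub'' indices $2n+1$ and $2n+2$, with the triples $\{2i-1,2i,2n+1\}$ sharing the hub $2n+1$ and the triples $\{2i,2i+1,2n+2\}$ (together with $\{1,2n,2n+2\}$) sharing the hub $2n+2$. The idea is that a ladder is what a $2n+2$-wheel degenerates to when the two hub lines of the wheel are split off, or equivalently, that the two families of triples arrange the $2n$ ``rim'' indices $\{1,\dots,2n\}$ into a cyclic pattern exactly as in a wheel, but now alternately attached to one of the two hubs. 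The cleanest route is to check directly that $\bL_{2n+2}$ satisfies the hypotheses of Proposition~\ref{prop:DegenWheel} (the degenerated-wheel criterion), since the claimed equation has exactly the product-difference shape $\prod\Delta_{j_l i_l}-\prod\Delta_{j_l i_{l+1}}=0$ that Proposition~\ref{prop:DegenWheel} produces.

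The key verification is to match the ladder to a degenerated wheel minus a hub. I would set the cyclic ``rim'' index sequence $i_1,\dots,i_{2n}$ to be $1,2,\dots,2n$ read cyclically, and assign to each consecutive pair $\{i_l,i_{l+1}\}=\{l,l+1\}$ the third index $j_l$ equal to the appropriate hub: $j_l=2n+1$ when $l$ is odd and $j_l=2n+2$ when $l$ is even (with the wrap-around triple $\{2n,1,2n+2\}=\{1,2n,2n+2\}$ matching $l=2n$). This exhibits the $2n$ triples of $\bL_{2n+2}$ as exactly a degenerated wheel minus a hub on the index set $\{1,\dots,2n\}$, where now the $j_l$ take only two values rather than being distinct—this is precisely the degeneracy that Proposition~\ref{prop:DegenWheel} permits (it explicitly allows $j_l=j_{l'}$). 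I would then check the two bullet conditions: each rim index $i_l\in\{1,\dots,2n\}$ appears only in the two triples $\{i_{l-1},i_l,j_{l-1}\}$ and $\{i_l,i_{l+1},j_l\}$, and $i_l=i_{l'}$ only when $l\equiv l'\pmod{2n}$; both are immediate from the explicit listing of $\bL_{2n+2}$. Applying Proposition~\ref{prop:DegenWheel} then yields
\begin{equation*}
\prod_{l=1}^{2n}\Delta_{j_l i_l}-\prod_{l=1}^{2n}\Delta_{j_l i_{l+1}}=0.
\end{equation*}

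The final step is to rewrite this product over $l=1,\dots,2n$ in the stated form indexed by $i=1,\dots,n$. Splitting the index $l$ by parity and substituting $i_l=l$, $j_l=2n+1$ for odd $l$ and $j_l=2n+2$ for even $l$, the factors with odd $l=2i-1$ contribute $\Delta_{2n+1,\,2i-1}$ and $\Delta_{2n+1,\,2i}$, while the factors with even $l=2i$ contribute $\Delta_{2n+2,\,2i}$ and $\Delta_{2n+2,\,2i+1}$; collecting these and using the antisymmetry $\Delta_{ab}=-\Delta_{ba}$ to align the subscripts reproduces $\prod_{i=1}^n\Delta_{2i,2n+2}\Delta_{2i-1,2n+1}$ and $\prod_{i=1}^n\Delta_{2i,2n+1}\Delta_{2i-1,2n+2}$, up to an overall sign that cancels between the two products. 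I expect the main obstacle to be purely bookkeeping: verifying that the parity-splitting of the cyclic product lands each determinant factor in the correct product with the correct subscript order, and confirming that the wrap-around triple $\{1,2n,2n+2\}$ is consistent with the cyclic convention so that no factor is double-counted or dropped. A secondary point to confirm is that $X=\bigcap_{S\in\bL_{2n+2}}D_S$ is genuinely a \emph{minimal} non-very generic intersection, so that Proposition~\ref{prop:DegenWheel} applies; this should follow from $\bL_{2n+2}\in Q(2n+2,2)\setminus P(2n+2,2)$ together with the fact that deleting any single triple restores the BBA-condition, analogously to the wheel case.
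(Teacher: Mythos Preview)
Your opening plan—realize $\bL_{2n+2}$ as a degeneration of a wheel and invoke Theorem~\ref{thm:wheel} with Theorem~\ref{thm:degen}—is exactly the paper's approach, but you never carry it out. Instead you pivot to Proposition~\ref{prop:DegenWheel}, and that is where the gap lies: Proposition~\ref{prop:DegenWheel} gives only a \emph{necessary} condition. It shows that if $\cA$ admits a minimal non-very generic intersection whose canonical presentation is $\bL_{2n+2}$, then the product-difference equation holds; it says nothing about the converse. So at best you have established $V^\circ_{\bL_{2n+2}}\subseteq\{\text{equation}=0\}$ on the locus where minimality holds, but not that every arrangement satisfying the equation lies in $V_{\bL_{2n+2}}$. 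To conclude that the equation \emph{defines} $V_{\bL_{2n+2}}$ you still need the reverse inclusion, and Proposition~\ref{prop:DegenWheel} does not supply it.

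The paper's route avoids this asymmetry. It identifies the correct wheel to degenerate from: not $\bW_{2n+2}$ as you suggest, but $\bW_{4n}$. Iterated degeneration in the sense of Section~\ref{Sec:Degen}—collapsing $4i-2$ to $2$ and $4i$ to $4$ for $i=2,\dots,n$, followed by a relabeling—sends $\bW_{4n}$ to $\bL_{2n+2}$. The remark opening Section~\ref{Sec:ExamNVVar} says Theorem~\ref{thm:wheel} continues to hold (as an if-and-only-if) for non-generic arrangements satisfying $H_i\neq H_{i\pm1},H_{i\pm2}$, so the $4n$-wheel equation still defines $V_{\bW_{4n}}$ on the locus with these parallels. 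Theorem~\ref{thm:degen} then transports this to $V_{\bL_{2n+2}}$, and substituting the parallel conditions into Equation~\eqref{n-wheelEq} for $\bW_{4n}$ produces the displayed ladder equation. Your bookkeeping with $i_l=l$ and alternating $j_l\in\{2n+1,2n+2\}$ is correct and is how one recognizes the shape of the answer, but the missing converse is what carries the weight.
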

\begin{proof}
Because $\bL_{2n+2}$ can be obtained by a degeneration of $\bW_{4n}$ from $4i-2(i=2,\dots,n)$ to $2$ and from $4i(i=2,\dots,n)$ to $4$ and by permutations of indices.
Thus, the equation follows from Theorem \ref{thm:degen} and Theorem \ref{thm:wheel}.
\end{proof}

Moreover, by applying Theorem \ref{thm:degen} to Theorem \ref{thm:wheel} in the same way as the proof above and combining it with Proposition \ref{prop:DegenWheel}, we obtain the following proposition.
\begin{prop}\label{coro:wheel}
Let $\cA$ be a generic line arrangement and let $X$ be a minimal non-very generic intersection in $\cL(\cB(\cA))$.
If there exists a subcollection of $\bT(X)$ formed by
$\{\{i_{l},i_{l+1},j_l\}\mid l=1,\dots,n \}$ which is a degenerated wheel minus a hub,
then there exists a non-very generic intersection $Y$ in $\cL(\cB(\cA))$, whose canonical presentation is a degenerated wheel $\{\{i_{l},i_{l+1},j_l\}\mid l=1,\dots,n \}\cup\{\{j_1,\ldots,j_n\}\}$.
\end{prop}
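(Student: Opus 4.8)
The plan is to show that the hypothesis of Proposition~\ref{coro:wheel} forces the normal vectors of $\cA$ to satisfy the wheel equation $\prod_{l=1}^n\Delta_{j_l i_l}-\prod_{l=1}^n\Delta_{j_l i_{l+1}}=0$, and then to read off from Theorem~\ref{thm:wheel} (and its extension to degenerate configurations discussed at the start of this section) that this equation is \emph{exactly} the condition for $\cA$ to admit a translation realizing the full degenerated wheel $\bT=\{\{i_l,i_{l+1},j_l\}\mid l\in[n]\}\cup\{\{j_1,\dots,j_n\}\}$. First I would invoke Proposition~\ref{prop:DegenWheel}: the presence of a degenerated-wheel-minus-a-hub subcollection in $\bT(X)$, together with the boundary condition that each hub index $i_l$ lies only in the two spokes $\{i_{l-1},i_l,j_{l-1}\}$ and $\{i_l,i_{l+1},j_l\}$, yields precisely $\prod_{l=1}^n\Delta_{j_l i_l}=\prod_{l=1}^n\Delta_{j_l i_{l+1}}$ as a relation on the Pl\"ucker coordinates of $\cA$.

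The second half of the argument is the converse-realization direction supplied by the wheel theorem. I would point out that $\bW_{2n}$, and hence any degenerated wheel, is built from the spoke circuits $\{i_l,i_{l+1},j_l\}$ plus the rim circuit $\{j_1,\dots,j_n\}$, and that the defining equation for admitting the wheel translation is exactly the product relation above — this is the content of Theorem~\ref{thm:wheel} (equation~\eqref{n-wheelEq}) after relabelling the cyclic indices $2i-1,2i,2i+1$ to $i_l,i_{l+1},j_l$, and after degenerating via Theorem~\ref{thm:degen} the parallel pairs in the same manner as in the proof of Proposition~\ref{2n+2ladder}. Since $\cA$ already satisfies this very equation by the first step, the translation exists; calling the corresponding intersection $Y=\bigcap_{S\in\bT}D_S$, we have $Y\in\cL(\cB(\cA))$ with canonical presentation the full degenerated wheel, and $Y$ is non-very generic because $\bT\in Q(n,k)\setminus P(n,k)$ (the rim circuit together with all spokes violates the BBA-condition exactly as $\bW_{2n}$ does).

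I expect the main obstacle to be bookkeeping rather than a genuine difficulty: I must verify that the degenerations and index permutations used to pass from the generic $\bW_{2n}$ of Theorem~\ref{thm:wheel} to the given abstract degenerated wheel $\bT$ actually preserve the product equation, i.e.\ that the repeated application of Theorem~\ref{thm:degen}(2) (substituting one normal vector for another when $j_l=j_{l'}$ collisions occur) correctly identifies the relevant determinants $\Delta_{j_l i_l}$ and $\Delta_{j_l i_{l+1}}$. The condition $i_l=i_{l'}\iff l\equiv l'\pmod n$ together with the hub-containment hypothesis guarantees that the cyclic spoke structure survives degeneration intact, so the factors in the two products match those coming from Proposition~\ref{prop:DegenWheel}. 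Once this matching is confirmed, the existence of $Y$ follows from the realizability half of Theorem~\ref{thm:wheel}, and the non-very-genericity of $Y$ follows since $\bT$ fails (P); this completes the proof.
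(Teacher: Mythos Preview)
Your approach is exactly the paper's: invoke Proposition~\ref{prop:DegenWheel} to obtain the product relation $\prod_l\Delta_{j_li_l}=\prod_l\Delta_{j_li_{l+1}}$, then use Theorem~\ref{thm:wheel} together with the degeneration Theorem~\ref{thm:degen} (just as in the proof of Proposition~\ref{2n+2ladder}) to realize the full degenerated wheel as a non-very generic intersection. One small caveat: you tacitly use the boundary condition that each $i_l$ lies in no element of $\bT(X)$ other than its two adjacent spokes, which is the second hypothesis of Proposition~\ref{prop:DegenWheel} but is not literally written into the hypotheses of Proposition~\ref{coro:wheel}; the paper's one-line proof makes the same implicit assumption, so this is a shared imprecision in the statement rather than a defect in your argument.
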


Let us list the $(\bT,r )$-singularity varieties corresponding to minimal non-very generic intersections in $\Arr(8,2)$.
Assume that a line arrangement $\cA$ with $8$ lines has a minimal non-very generic intersection $X$ in $\cL(\cB(\cA))$.
We will denote $n'$ by $\abs{\bigcup_{S\in \bT(X)}S}$.
Because $X$ is minimal non-very generic, $\rank X+1=\nu(\bT(X))=\sum_{S\in\bT(X)}(\abs{S}-2)=\sum_{S\in\bT(X)}\abs{S}-2\abs{\bT(X)}$ and each line in $\bigcup_{S\in \bT(X)}S$ is contained in at least two subsets $S\in\bT(X)$.
Hence, $2n'\leq \sum_{S\in\bT(X)}\abs{S}$.
After subtracting $2\nu(\bT(X))\geq 2\abs{\bT(X)}$ from both sides of this inequality, 
$2\left(n'-\nu(\bT(X))\right)\leq2\left(n'-\abs{\bT(X)}\right)\leq \nu(\bT(X))$, then we get $2n'/3\leq \nu(\bT(X))=\rank (X)+1$.
Since $n'\geq 6$, we should consider the cases $n'=6,7,8$.
In the case $n'=6,7$, since $2n'/3-1\leq \rank X\leq n'-3$, $X$ is either as in Example \ref{Exam:NVGV} or permutations of them.
In the case $n'=8$, since $13/3\leq \rank X\leq 5$, we obtain $\rank X=5$ and $\nu(\bT(X))=6$.
If $\abs{S}=3$ for all $S\in \bT(X)$, then there exist two lines, $H_{i_1}$ and $H_{i_2}$, in $\cA$ such that $\abs{\{S\in\bT(X)\mid i_l\in S\}}=3$ for $l=1,2$. Then, $\bT(X)$ is an $8$-ladder or a degenerated $10$-wheel.
If $\abs{S}=3$ for some $S\in \bT(X)$, then $\bT(X)$ is a permutation of the $8$-wheel.
Then, the equations follow from Theorem \ref{thm:degen} and Theorem \ref{thm:wheel}.
Hence, the following proposition holds.

\begin{thm}\label{thm:8lines}
Let $\cA$ be a generic arrangement with $8$ lines in a plane.
The canonical presentation of minimal non-very generic intersections in $\cL(\cB(\cA))$ is one of the following five families or their permutations:
the $6$-wheel $\bW_6$, the degenerated $8$-wheel $(\bW_8)_{8\to 4}$, the $8$-wheel $\bW_8$, the degenerated $10$-wheel with indices $8$ and $9$ swapped, $(((89).\bW_{10})_{10\to 6})_{9\to 4}$.
The defining equations of these $(\bT,r )$-singularity varieties are as follows:
\begin{align*}
\bW_6:&\Delta_{21}\Delta_{43}\Delta_{65}-\Delta_{23}\Delta_{45}\Delta_{61}=0,\\
(\bW_8)_{8\to 4}:&\Delta_{21}\Delta_{43}\Delta_{65}\Delta_{47}-\Delta_{23}\Delta_{45}\Delta_{67}\Delta_{41}=0,\\
\bW_8:&\Delta_{21}\Delta_{43}\Delta_{65}\Delta_{87}-\Delta_{23}\Delta_{45}\Delta_{67}\Delta_{81}=0,\\
\bL_8:&\Delta_{17}\Delta_{37}\Delta_{57}\Delta_{28}\Delta_{48}\Delta_{68}-\Delta_{18}\Delta_{38}\Delta_{58}\Delta_{27}\Delta_{47}\Delta_{67}=0,\\
(((89).\bW_{10})_{10\to 6})_{8\to 4}:&\Delta_{21}\Delta_{43}\Delta_{65}\Delta_{47}\Delta_{68}-\Delta_{23}\Delta_{45}\Delta_{67}\Delta_{48}\Delta_{61}=0.
\end{align*}
\end{thm}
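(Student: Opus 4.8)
The plan is to determine the admissible canonical presentations $\bT(X)$ by a counting argument followed by a finite combinatorial enumeration, and then to read off the defining equations from the wheel equation of Theorem~\ref{thm:wheel} via the degeneration transfer of Theorem~\ref{thm:degen}. Fix a minimal non-very generic intersection $X\in\cL(\cB(\cA))$ and put $n'=\abs{\bigcup_{S\in\bT(X)}S}$, the number of lines actually appearing. Minimality yields two facts that drive everything. First, the rank defect is exactly one, $\rank X+1=\nu(\bT(X))$: the inequality $\rank X\le\nu(\bT(X))$ holds, and no proper superspace of $X$ may be non-very generic, so the defect cannot exceed one without a constraint being removable while keeping the defect positive. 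Second, every line of the support lies in at least two members of $\bT(X)$; a singly-used line $p$ contributes the unique $e_p^\ast$-term to one normal $\alpha_S$, so $\alpha_S$ enters no linear dependence among the $\{\alpha_{S,\iota}\}$ and could be dropped, producing a smaller non-very generic intersection and contradicting minimality.

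Next I would extract the numerical constraints. From $\nu(\bT(X))=\sum_{S\in\bT(X)}(\abs{S}-2)=\sum_S\abs{S}-2\abs{\bT(X)}$ and the incidence bound $\sum_S\abs{S}\ge 2n'$ one gets $2n'/3\le\nu(\bT(X))$. Since the unique top element of $\cL(\cB(\cA))$ supported on $n'$ lines is $\bT=\{[n']\}$, which is very generic of rank $n'-2$, a non-very generic $X$ satisfies $\rank X\le n'-3$, hence $\nu(\bT(X))=\rank X+1\le n'-2$. Combining $2n'/3\le\nu(\bT(X))\le n'-2$ forces $n'\ge 6$, while $n'\le 8$ is trivial; thus $n'\in\{6,7,8\}$, and in each case the bounds collapse to $\nu(\bT(X))=n'-2$ and $\rank X=n'-3$. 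Writing $\sum_S\abs{S}=n'-2+2\abs{\bT(X)}$ and using $3\abs{\bT(X)}\le\sum_S\abs{S}$ together with $\sum_S\abs{S}\ge 2n'$ then pins $\abs{\bT(X)}$: namely $\abs{\bT(X)}=4$ when $n'=6$, $\abs{\bT(X)}=5$ when $n'=7$, and $\abs{\bT(X)}\in\{5,6\}$ when $n'=8$.

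It remains to identify each case up to the $\frS_8$-action. For $n'=6$ all four members are triples and every line has degree two, which after a finite check is the six-wheel $\bW_6$ of Example~\ref{Exam:NVGV}; for $n'=7$ all five members are triples with a single degree-three line, matching the seven-line presentation of Example~\ref{Exam:NVGV}, a permutation of $(\bW_8)_{8\to 4}$. For $n'=8$ with $\abs{\bT(X)}=5$ one member has size four and the rest are triples with all lines of degree two, a permutation of the eight-wheel $\bW_8$. The substantive case is $n'=8$ with $\abs{\bT(X)}=6$, where all members are triples; the degree sequence must be two lines of degree three and six of degree two, since degree four is excluded by (Q2) (a line in four triples would need eight distinct partners among the seven other lines). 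The main obstacle is to show that, up to permutation, there are exactly two such hypergraphs satisfying (Q0)--(Q2) and minimality, governed by the dichotomy of whether the two degree-three lines share a common triple: if they do not, the configuration is the eight-ladder $\bL_8$, and if they do, it is the swapped, doubly-degenerated ten-wheel listed in the statement. I would handle this by first arguing connectivity from minimality, then fixing the two degree-three lines and tracking how the six degree-two lines thread through the triples incident to them, discarding the arrangements that violate (Q2).

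Finally, the defining equations follow without further computation. Theorem~\ref{thm:wheel} supplies the equations of $V_{\bW_6}$ and $V_{\bW_8}$ directly. Theorem~\ref{thm:degen}(2), realized as the substitution $\alpha_n\mapsto\alpha_j$ in the wheel equation, transfers these to $V_{(\bW_8)_{8\to 4}}$ and to the degenerated ten-wheel, exactly as in the proof of Proposition~\ref{2n+2ladder}; and Proposition~\ref{2n+2ladder} specialized to $n=3$ gives the equation of $V_{\bL_8}$. Since each resulting polynomial is nonzero and does not vanish on all of $\Arr(8,2)$, every listed $\bT$ indeed cuts out a genuine $(\bT,r)$-singularity variety and is realized by some arrangement, which completes the classification.
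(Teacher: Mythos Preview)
Your proposal is correct and follows essentially the same route as the paper's proof. Both arguments use minimality to extract (i) rank defect exactly one and (ii) every supporting line in at least two members of $\bT(X)$, derive the bounds $2n'/3\le\nu(\bT(X))\le n'-2$ to force $n'\in\{6,7,8\}$ with $\nu(\bT(X))=n'-2$, and then enumerate the possible $\bT(X)$ case by case, reading off the equations from Theorem~\ref{thm:wheel} via Theorem~\ref{thm:degen} and Proposition~\ref{2n+2ladder}. Your write-up is in fact more explicit than the paper's in several places: you spell out why a line of degree four is forbidden by (Q2), you organize the $n'=8$, $|\bT(X)|=6$ subcase by the dichotomy of whether the two degree-three lines share a triple, and you give a cleaner reason for the upper bound $\rank X\le n'-3$ (via $X\supseteq D_{[n']}$ and the very-genericity of $\{[n']\}$).
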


What happens if there are nine or more lines?
Since $2n'/3-1\leq \rank X\leq n'-2$, the rank of $X$ may not be uniquely determined by $n'$.
For example, let $n'=9$, then $\rank X$ could be either $5$ or $6$.
We give an example where $\rank X=5$.

\begin{exam}\label{exam:K33}
Let $\cA$ be a line arrangement with normal vectors (Fig~\ref{fig:9line})
\begin{equation*}
(\alpha_1\dots\alpha_9)=\begin{pmatrix}-4&-3&-2&-1&0&1&2&3&4\\ 1&1&1&1&1&1&1&1&1\end{pmatrix}.
\end{equation*}
Consider $\bT_0=\{123,456,789,147,258,369\}$, and the intersection $X=\bigcap_{S\in \bT_0} D_S$ in $\cL(\cB(\cA))$.
\begin{figure}[t]
\centering
	\begin{tikzpicture}[yscale=0.8, xscale=3]%,rotate=90
		\draw (0.4,-3.8)--(1.9,2.2) node[pos=1.05]{$H^t_{1}$};
		\draw (0.4,-3.6)--(1.8,0.6) node[pos=1.05]{$H^t_{2}$};
		\draw (0.4,-3.4)--(1.5,-1.2) node[pos=1.1]{$H^t_{3}$};
		\draw (-0.4,-0.4)--(2.0,2.0) node[pos=-0.08]{$H^t_{4}$};
		\draw (-0.4,0)--(1.8,0) node[pos=-0.09]{$H^t_{5}$};
		\draw (-0.4,0.4)--(1.6,-1.6) node[pos=-0.1]{$H^t_{6}$};
		\draw (-0.8,7)--(1.9,1.6) node[pos=1.05]{$H^t_{7}$};
		\draw (-0.8,7.2)--(1.9,-0.9) node[pos=1.05]{$H^t_{8}$};
		\draw (-0.8,7.3)--(1.5,-1.8) node[pos=1.05]{$H^t_{9}$};
	\end{tikzpicture}\par
\caption{a representative of $X$ in Example \ref{exam:K33}}\label{fig:9line}
\end{figure}
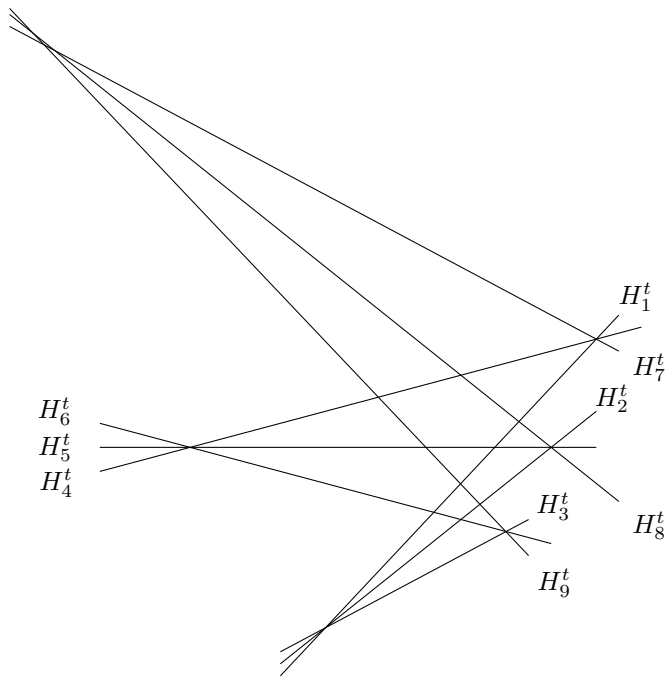
We obtain normals
\begin{equation*}
\begin{pmatrix}
\alpha_{123}\\ \alpha_{456}\\ \alpha_{789}\\ \alpha_{147}\\ \alpha_{258}\\ \alpha_{369}
\end{pmatrix}=
\begin{pmatrix}
-1&2&-1&0&0&0&0&0&0\\
0&0&0&-1&2&-1&0&0&0\\
0&0&0&0&0&0&-1&2&-1\\
-3&0&0&6&0&0&-3&0&0\\
0&-3&0&0&6&0&0&-3&0\\
0&0&-3&0&0&6&0&0&-3
\end{pmatrix}.
\end{equation*}
The non-very generic intersection $X$ of rank $5$ is minimal because $\nu(\bT_0\setminus i)=\bigcap_{C\in \bT_0\setminus i}D_C=4$ where $\bT_0\setminus i=\{C\in \bT_0\mid i\notin C\}$ for $i=1,\dots,9$.
We have $\min\left\{\nu(\bT')\left| \bT'\in P(n,k), \bT< \bT'\right\}\right.=\rank X+2$.
It is easy to check that $\bT_0$ satisfies the BBA-condition.
On the other hand, by Proposition \ref{coro:wheel}, the arrangement $\cA$ belongs to $(\bT,r )$-singularity varieties of nine $8$-wheels  $\bT_1,\ldots ,\bT_9$ and six degenerated $12$-wheels $\bT_{10} ,\ldots,\bT_{15}$ as follows:
\begin{align*}
&\begin{aligned}
\bT_1=&\{123,456,147,258,3678\},&
\bT_2=&\{123,789,147,258,3459\},\\
\bT_3=&\{123,456,147,369,2579\},&
\bT_4=&\{123,789,147,369,2468\},\\
\bT_5=&\{123,456,258,369,1489\},&
\bT_6=&\{123,789,258,369,1567\},\\
\bT_7=&\{456,789,147,258,1269\},&
\bT_8=&\{456,789,147,369,1358\},\\
\bT_9=&\{456,789,258,369,2347\},
\end{aligned}\\
&\begin{aligned}
\bT_{10}=&\{123,456,789,147,258,369,159\},&
\bT_{11}=&\{123,456,789,147,258,369,168\},\\
\bT_{12}=&\{123,456,789,147,258,369,249\},&
\bT_{13}=&\{123,456,789,147,258,369,267\},\\
\bT_{14}=&\{123,456,789,147,258,369,348\},&
\bT_{15}=&\{123,456,789,147,258,369,357\}.
\end{aligned}
\end{align*}
Then we obtain $V_{\bT_0}\subset V_{\bT_i}$ for $i\in[15]$.
Note that, $V_{\bT_1},\dots,V_{\bT_{15}}$ do not intersect transversally.
Clearly, no intersection is larger than $X$ whose canonical presentation is the $8$-wheel.
However, if we consider $X\cap\bigcap_{S\in\bT_1}D_S$, then its rank is $6$ and the canonical presentation is $\{123,456,147,258,36789\}$.
Furthermore, the intersections corresponding to these degenerated $12$-wheels are the first example of intersections that are not minimal non-very generic intersections, but all proper subsets of canonical presentation that satisfy the BBA-condition.
\end{exam}

\section*{Acknowledgment}
This work was supported by JSPS KAKENHI Grant Number JP23KJ0031.
The author would like to thank Yasuhide Numata, Mutsumi Saito, Simona Settepanella, So Yamagata, and anonymous referees for their helpful comments.

\bibliographystyle{alpha}
\bibliography{reference}
\end{document}